\documentclass[12pt]{article}

\usepackage{amsmath}
\usepackage{amsfonts}
\usepackage{amsthm}
\usepackage{amssymb}
\usepackage{enumerate}
\usepackage{graphicx}
\usepackage{extarrows}
\usepackage{mathtools}
\usepackage[all]{xy}

\usepackage{tikz-cd}
\usepackage{nicefrac}
\usetikzlibrary{matrix, arrows}

\DeclareMathOperator\GL{GL}
\DeclareMathOperator\PGL{PGL}
\DeclareMathOperator\SL{SL}

\DeclareMathOperator\Hom{Hom}

\DeclareMathOperator\id{id}

\DeclareMathOperator\Ext{Ext}
\DeclareMathOperator\ad{ad}
\DeclareMathOperator\coad{coad}
\DeclareMathOperator\cad{cad}
\DeclareMathOperator\cop{cop}
\DeclareMathOperator\op{op}

\newcommand\Z{\mathbb Z}

\newcommand{\s}{\mathfrak s}
\newcommand{\gS}{\mathfrak S}
\renewcommand{\t}{\mathfrak t}
\newcommand{\T}{\mathfrak T}

\newcommand{\uS}{\underline{S}}
\newcommand{\ot}{\otimes}

\newtheoremstyle{definition}
{10pt}
{0pt}
{\normalfont}
{}
{\bfseries}
{}
{ }
{}

\theoremstyle{definition}
\newtheorem{Satz}{Satz}[section]
\newtheorem{Lemma}[Satz]{Lemma}
\newtheorem{Proposition}[Satz]{Proposition}
\newtheorem{Cor}[Satz]{Corollary}
\newtheorem{Theorem}[Satz]{Theorem}
\newtheorem{Definition}[Satz]{Definition}

\newtheorem{Example}[Satz]{Example}


\makeatletter
\renewenvironment{proof}[1][\proofname]{\par
\vspace{-5pt}
\pushQED{\qed}%
\normalfont
\topsep0pt \partopsep0pt 
\trivlist
\item[\hskip\labelsep
\bfseries
#1\@addpunct{:}]\ignorespaces
}{%
\popQED\endtrivlist\@endpefalse
\addvspace{6pt plus 6pt} 
}
\makeatother

\newcounter{num1}

\newcounter{num2}
\newenvironment{thmlist}{\begin{list}{\arabic{num2}.}{\usecounter{num2}
\topsep-5pt \leftmargin23pt \itemindent0pt }}{\end{list}}

\parindent0em
\parskip2ex plus0.5ex minus 0.5ex

\setlength{\textwidth}{16cm}
\setlength{\textheight}{23cm}
\hoffset -9mm \topmargin= -13mm

\begin{document}

\numberwithin{equation}{section}

\begin{flushright}
   {\sf Hamburger$\;$Beitr\"age$\;$zur$\;$Mathematik$\;$Nr.$\;$664}
\end{flushright}
\vskip 11mm

\begin{center}{\bf \Large 
Hochschild Cohomology and the Modular Group}

\vskip 8mm

{\large Simon Lentner, Svea Nora Mierach,\\  \vskip 1mm Christoph Schweigert, Yorck Sommerh\"auser}
\vskip 9mm
\end{center}

\begin{abstract}
\noindent It has been shown in previous work that the modular group acts
projectively on the center of a factorizable ribbon Hopf algebra. The
center is the zeroth Hochschild cohomology group. In this article, we
extend this projective action of the modular group to an arbitrary
Hochschild cohomology group of a factorizable ribbon Hopf algebra, in
fact up to homotopy even to a projective action on the entire Hochschild
cochain complex.
\end{abstract}

\section*{Introduction}
\enlargethispage{1mm}
An important idea coming from conformal field theory is that modular
categories lead to projective representations of mapping class groups of
surfaces (see \cite{BK}, \cite{G}, \cite{T} and the references cited
therein). At least for certain aspects of this construction, it is not
necessary that the category under consideration is semisimple. For a
particularly simple surface, the torus, the mapping class group is the
homogeneous modular group of two-times-two matrices with integer entries
and determinant one. By applying these ideas in the case of the
representation category of a factorizable ribbon Hopf algebra, which is
not required to be semisimple, we obtain a projective representation of
the homogeneous modular group on the center of this Hopf algebra (see
for example \cite{CW1}, \cite{CW2}, \cite{Ke}, \cite{KL}, \cite{LM}
and~\cite{T}). As the center is the zeroth Hochschild cohomology group
of the Hopf algebra, it is natural to ask whether there is a
corresponding action on the higher cohomology groups. In this article,
we answer this question affirmatively by showing that the modular group
acts, projectively and up to homotopy, even on the entire Hochschild
cochain complex.

The article is organized as follows: In the first section, we briefly
review the Hochschild cohomology of an algebra~$A$ with coefficients in
an $A$-bimodule~$M$, as found for example in~\cite{W}. We then construct in
Proposition~\ref{LeftRight} a particular homotopy between two cochain maps that will be important later for the verification of the defining relations of the modular
group. In the second section, we turn to the case where the algebra~$A$
is a Hopf algebra and introduce a way to modify the bimodule structure
of~$M$ while leaving the Hochschild cohomology groups essentially
unchanged. In the third section, we turn to the case where~$A$ is a
factorizable ribbon Hopf algebra and recall the action of the modular
group on its center. In particular, we introduce the Radford and the
Drinfel'd map. Our treatment here follows largely the exposition
in~\cite{SZ}, to which the reader is referred for references to the
original work. In the fourth section, we take advantage of our
modification of the bimodule structure introduced in the second section
to generalize the Radford and the Drinfel'd map to the Hochschild
cochain complex. In the fifth and final section, we use these maps to 
generalize the action of the modular group on the center to an action 
on all Hochschild cohomology groups of our factorizable ribbon Hopf 
algebra.

We will always work over a base field that is denoted by~$K$, and all
unadorned tensor products are taken over~$K$. The dual of a vector
space~$V$ is denoted by $V^*:=\Hom_K(V,K)$.

The authors would like to thank Sarah Witherspoon for pointing out
References~\cite{FS}, \cite{GK}, \cite{PW} and~\cite{SS} as well as for
further helpful discussions. During the work on this article, the first
and the third author were partially supported by SFB~676 and RTG~1670.

\section{Hochschild Cohomology} \label{Sec:HochCohom}
We begin by briefly recalling the approach to Hochschild cohomology via
the standard resolution. Further details can be found for example
in~\cite[Chap.~IX]{CE} or~\cite[Chap.~9]{W}. We consider an associative
algebra~$A$ over our base field~$K$ and an $A$-bimodule~$M$. As
in~\cite[Chap.~IX, \S~3, p.~167]{CE}, we assume that the left and the
right action of~$A$ on~$M$ become equal when restricted to~$K$, so that
an $A$-bimodule is the same as a module over~$A \ot A^{\op}$. Here~$A^{\op}$ denotes the opposite algebra, in which the product is modified by interchanging the factors.

\begin{Definition}
For an integer~$n>0$, we call $C^n(A,M):=\Hom_K(A^{\ot n},M)$ the space
of cochains, and extend this definition to all integers by setting
$C^0(A,M):=M$ and $C^n(A,M):=0$ for~$n<0$. For $n > 0$ and $i=
0,\dots,n$, we define the coface maps \mbox{$\partial^{n-1}_i\colon
C^{n-1}(A,M) \to C^n(A,M)$} as
\[\partial^{n-1}_i(f)(a_1 \ot \dots \ot a_n) :=
\begin{cases}
a_1.f(a_2\ot\dots\ot a_n) &\text{if } i=0, \\
f(a_1\ot\dots\ot a_{i}a_{i+1}\ot\dots\ot a_n) &\text{if } 0<i<n, \\
f(a_1 \ot \dots \ot a_{n-1}).a_n &\text{if } i=n.
\end{cases}\]
Using these maps, we define the coboundary operator $d^{n-1} \colon
C^{n-1}(A,M)\to C^n(A,M)$, which is also called the differential, as
$d^{n-1} := \sum_{i=0}^n(-1)^i \partial^{n-1}_i$, and extend this
definition to negative numbers by setting~$d^n=0$ for~$n<0$. We then get
a cochain complex
\begin{equation*}
\cdots \xlongrightarrow{d^{-2}} 0 \xlongrightarrow{d^{-1}} M
\xlongrightarrow{d^0} \Hom_K(A,M)\xlongrightarrow{d^1} \Hom_K(A\ot A,M)
\xlongrightarrow{d^2} \cdots
\end{equation*}
that we briefly denote by~$C(A,M)$.
The $n$-th Hochschild cohomology group of the algebra~$A$ with
coefficients in the bimodule~$M$ is defined as the $n$-th cohomology
group of this cochain complex, i.e.,
\[HH^n(A,M):=H^n(C(A,M),d).\]
\end{Definition}

We note that for finite-dimensional separable algebras, and therefore in particular for finite-dimensional semisimple algebras over fields of characteristic zero, the higher Hoch\-schild cohomology groups~$HH^n(A,M)$ for~$n \ge 1$ vanish, as shown for example in~\cite[Chap.~IX, Thm.~7.10, p.~179]{CE}.

The following special cases will be particularly important in the sequel:
\begin{Example} \label{Zero}
For the zeroth Hochschild cohomology group, we find
\begin{align*}
HH^0(A,M)&=\ker(d^0 \colon C^0(A,M)\to C^1(A,M))\\
&=\ker(\partial^0_0-\partial^0_1 \colon C^0(A,M)\to C^1(A,M), m\mapsto
(a\mapsto a.m-m.a))\\
&=\{m\in M\mid a.m=m.a \text{ for all } a\in A\},
\end{align*}
a set that is often called the space of invariants of~$M$, for example
in~\cite[Chap.~IX, \S~4, p.~170]{CE} or~\cite[Sec.~1.1, p.~2]{Ka}. For
$M=A$, where the bimodule structure is given by multiplication, we get
in particular that
\[HH^0(A,A)=Z(A),\]
the center of the algebra~$A$.

For any bimodule~$M$, the dual space $M^*=\Hom_K(M,K)$ is again a
bimodule with respect to the action $(a.\varphi.b)(m)=\varphi(b.m.a)$.
According to the preceding computation, we then have
\begin{align*}
HH^0(A,M^*) &= \{\varphi\in M^*\mid
\varphi(m.a)=\varphi(a.m) \text{ for all } a\in A \text{ and all } m\in M\}.
\end{align*}
\end{Example}

By composition on the left, any bimodule homomorphism~$g \colon M \rightarrow N$ induces a homomorphism
\[g_*: C^n(A,M) \rightarrow C^n(A,N),~f \mapsto g \circ f\]
between the cochain groups, where in general we use a lower star for the map induced by composition on the left and an upper star for the map induced by composition on the right. Because these homomorphisms~$g_*$ commute with the coboundary operators, they can be combined to a cochain map. An element
$c \in Z(A)$ in the center of~$A$ gives rise to two natural choices
for~$g$ on every bimodule~$M$, namely the left and right actions
\[l_c^M \colon M \rightarrow M,~m \mapsto c.m \qquad \text{and} \qquad
r_c^M \colon M \rightarrow M,~m \mapsto m.c.\]
The induced maps on the Hochschild cochain complex are related as follows:
\begin{Proposition} \label{LeftRight}
The cochain maps $(l_c^M)_*$ and $(r_c^M)_*$ are homotopic.
\end{Proposition}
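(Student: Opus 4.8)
My plan is to produce an explicit homotopy realized by \emph{inserting the central element $c$ into the arguments of a cochain}. Conceptually this is forced: the two cochain maps in question are the ones induced on $C(A,M)=\Hom_{A\ot A^{\op}}(P,M)$ by the two $A$-bimodule chain endomorphisms of the standard (bar) resolution $P$ of $A$ that multiply the outer tensor factor by~$c$ on the left, respectively on the right. Centrality of~$c$ is exactly what makes these two endomorphisms bimodule maps, and since both lift multiplication by~$c$ on~$A$ along the augmentation, the comparison theorem of homological algebra guarantees that they are chain homotopic; dualizing then gives the cochain homotopy. Rather than argue this abstractly, I would write the homotopy down by hand and check the homotopy identity directly, since the explicit formula is what gets reused later in verifying the modular relations.

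Concretely, I would define maps $h^n\colon C^n(A,M)\to C^{n-1}(A,M)$ for all~$n$ (with $h^n=0$ for $n\le 0$) by
\[
h^n(f)(a_1\ot\dots\ot a_{n-1}):=\sum_{i=0}^{n-1}(-1)^i\,f(a_1\ot\dots\ot a_i\ot c\ot a_{i+1}\ot\dots\ot a_{n-1}),
\]
that is, by summing with alternating signs over the $n$ ways of slotting~$c$ into the arguments of~$f$. The statement to verify is then the cochain homotopy identity
\[
d^{n-1}\circ h^n+h^{n+1}\circ d^n=(l_c^M)_*-(r_c^M)_*
\]
on each $C^n(A,M)$, which by definition is precisely what it means for $(l_c^M)_*$ and $(r_c^M)_*$ to be homotopic.

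To prove the identity I would expand both composites using the coface maps and sort the resulting terms into three kinds: (a) terms in which the differential multiplies two \emph{original} arguments $a_ja_{j+1}$ while $c$ sits in some other slot; (b) terms in which the differential multiplies $c$ against an adjacent argument, producing $ca_j$ or $a_jc$; and (c) the two terms in which $c$ has been inserted at the very front or the very back and is then hit by the outermost coface map. The type-(a) terms occur identically in $d^{n-1}h^n$ and in $h^{n+1}d^n$ with opposite signs and cancel in pairs; the type-(b) terms telescope, the essential point being that $c$ is central, so $ca_j=a_jc$ identifies the two competing insertions and lets them cancel. What remains is exactly the pair of boundary contributions $c.f(a_1\ot\dots\ot a_n)-f(a_1\ot\dots\ot a_n).c$, which is $(l_c^M)_*(f)-(r_c^M)_*(f)$. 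The main obstacle is not conceptual but bookkeeping: keeping the index ranges and the signs aligned across the two double sums so that the cancellations in~(a) and the telescoping in~(b) match up. The sole genuine hypothesis, centrality of~$c$, enters precisely at the type-(b) step (and already guarantees that $l_c^M$ and $r_c^M$ are bimodule endomorphisms, so that the induced cochain maps are defined at all).
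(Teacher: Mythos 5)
Your proposal is correct and follows the paper's own proof essentially verbatim: the paper defines exactly the same insertion homotopy $h^n(f)(a_1\ot\dots\ot a_{n-1})=\sum_{j}(-1)^j f(a_1\ot\dots\ot a_j\ot c\ot a_{j+1}\ot\dots\ot a_{n-1})$ and verifies the identity $d\circ h+h\circ d=(l_c^M)_*-(r_c^M)_*$ by the same bookkeeping you describe (the terms where the differential merges two original arguments cancel in pairs between the two composites, the terms producing $a_ic$ versus $ca_i$ cancel by centrality, and the outermost coface terms leave $c.f-f.c$). Your conceptual preamble via lifting left and right multiplication by $c$ to a projective resolution and invoking the comparison theorem is also in the paper, as the second, resolution-independent proof sketched in the remark immediately following the proposition.
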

\begin{proof}
For $n \geq 0$, we define $h^{n+1} \colon C^{n+1}(A,M) \to C^{n}(A,M)$ as
\begin{align*}
&h^{n+1}(f)(a_1 \ot \dots \ot a_n) :=
\sum_{j=0}^{n}(-1)^jf(a_1 \ot \dots \ot a_j \ot c \ot a_{j+1} \ot \dots \ot a_n) \\
&\qquad = f(c\ot a_1 \ot \dots \ot a_n)
+ \sum_{j=1}^{n-1} (-1)^j f(a_1 \ot \dots \ot a_j \ot c \ot a_{j+1} \ot
\dots \ot a_n) \\
&\mspace{213mu} + (-1)^n f(a_1 \ot \dots \ot a_n \ot c).
\end{align*}
In particular, we have $h^1(f):=f(c)$. For $n \le 0$, we define
$h^n:=0$, and claim that $h=(h^n)_{n \in \Z}$ is a homotopy between
$(l_c^M)_*$ and $(r_c^M)_*$. To prove this, we have to show that
\[(d^{n-1} \circ h^n + h^{n+1} \circ d^n)(f)(a_1\ot\dots\ot
a_n)=c.f(a_1\ot\dots\ot a_n)-f(a_1\ot\dots\ot a_n).c\]
for all $f \in C^n(A,M) = \Hom_K(A^{\ot n},M)$ and $a_1, \dots, a_n \in A$.
We first show this for the cases involving~$h^1$. For $n=0$, we have as
in Example~\ref{Zero} above that
\begin{align*}
(h^1 \circ d^0)(m) = d^0(m)(c) = c.m-m.c
\end{align*}
for all $m \in M$.
For~$n=1$, we need to consider $f \in \Hom_K(A,M)$ and have
\begin{align*}
&(d^0 \circ h^1 + h^2 \circ d^1)(f)(a)
= a.h^1(f) - h^1(f).a + d^1(f)(c \ot a) - d^1(f)(a \ot c) \\
&= a.f(c) - f(c).a + c.f(a) - f(ca) + f(c).a - a.f(c) + f(ac) - f(a).c
= c.f(a)-f(a).c
\end{align*}
for all $a \in A$, because~$c$ is central.

We now turn to the general case, where $n\geq 2$. For \mbox{$f \in
C^n(A,M)$} and \mbox{$a_1,\dots,a_n\in A$}, we have that
$d^{n-1}(h^n(f))(a_1\ot\dots\ot a_n)$ is given by the sum
\begin{align*}
&d^{n-1}(h^n(f))(a_1\ot\dots\ot a_n) = a_1.h^n(f)(a_2 \ot \dots \ot a_n) \\
&\qquad \quad + \sum_{i=1}^{n-1} (-1)^i
h^n(f)(a_1 \ot \dots \ot a_i a_{i+1} \ot \dots \ot a_n)
+ (-1)^n h^n(f)(a_1\ot\dots\ot a_{n-1}).a_n \\
&\qquad =\sum_{j=1}^{n} (-1)^{j-1} a_1.f(a_2 \ot \dots \ot a_j \ot c \ot
a_{j+1} \ot \dots \ot a_n)\\
&\qquad \quad + t_1 + t_2 +(-1)^n \sum_{j=0}^{n-1} (-1)^j
f(a_1 \ot \dots \ot a_j \ot c \ot a_{j+1}\ot \dots\ot a_{n-1}).a_n,
\end{align*}
where for the second equality we have broken the middle sum into two
terms, namely the term
\begin{align*}
t_1 := & \sum_{0 \le j < i \le n-1} (-1)^{i+j}
f(a_1 \ot \dots \ot a_j \ot c \ot a_{j+1} \ot \dots \ot a_i a_{i+1} \ot
\dots \ot a_n) \\
= & \sum_{0 \le j < i \le n-1} (-1)^{i+j} \partial^n_{i+1}(f)
(a_1 \ot \dots \ot a_j \ot c \ot a_{j+1} \ot \dots \ot a_n) \\
\intertext{and the term}
t_2 := & \sum_{1 \le i \le j \le n-1} (-1)^{i+j}
f(a_1\ot\dots\ot a_ia_{i+1} \ot \dots \ot a_{j+1} \ot c \ot a_{j+2} \ot
\dots \ot a_n) \\
= & \sum_{1 \le i \le j \le n-1} (-1)^{i+j} \partial^n_{i}(f)
(a_1 \ot \dots \ot a_{j+1} \ot c \ot a_{j+2} \ot \dots \ot a_n) \\
= & \sum_{1 \le i < j \le n} (-1)^{i+j-1} \partial^n_{i}(f)
(a_1 \ot \dots \ot a_{j} \ot c \ot a_{j+1} \ot \dots \ot a_n).
\end{align*}

On the other hand, $h^{n+1}(d^n(f))(a_1\ot\dots\ot a_n)$ is given by the sum
\begin{align*}
h^{n+1}(d^n(f))(a_1\ot\dots\ot a_n) &=
\sum_{j=0}^{n} (-1)^j d^n(f)(a_1 \ot \dots \ot a_j \ot c \ot a_{j+1} \ot
\dots \ot a_n)\\
&= \sum_{j=0}^{n} \sum_{i=0}^{n+1} (-1)^{i+j} \partial^n_i(f)
(a_1 \ot \dots \ot a_j \ot c \ot a_{j+1} \ot \dots \ot a_n).
\end{align*}
In the preceding sum, the term for~$i=0$ can be written in the form
\begin{align*}
&\sum_{j=0}^{n} (-1)^j \partial^n_0(f)
(a_1 \ot \dots \ot a_j \ot c \ot a_{j+1} \ot \dots \ot a_n) \\
&\qquad = c.f(a_1\ot\dots\ot a_n) - a_1.h^n(f)(a_2\ot\dots\ot a_n).
\end{align*}
Looking at the term for~$i=n+1$, we get similarly that
\begin{align*}
&\sum_{j=0}^{n} (-1)^j (-1)^{n+1} \partial^n_{n+1}(f)
(a_1 \ot \dots \ot a_j \ot c \ot a_{j+1} \ot \dots \ot a_n) \\
&\qquad = (-1)^{n+1} h^n(f)(a_1\ot\dots\ot a_{n-1}).a_n
- f(a_1\ot\dots\ot a_n).c.
\end{align*}

In the remaining terms, we have~$1 \le i \le n$. The sum of the terms
with $1 \le i < j \le n$ is equal to $- t_2$.
The sum of the terms with $i=j$ is
\begin{align*}
&\sum_{i=1}^{n} f(a_1 \ot \dots \ot a_{i-1} \ot a_i c \ot a_{i+1} \ot
\dots \ot a_n), 
\intertext{while the sum of the terms with $i=j+1$ is}
- &\sum_{j=0}^{n-1} f(a_1 \ot \dots \ot a_j \ot c a_{j+1} \ot a_{j+2}
\ot \dots \ot a_n).
\end{align*}
Because~$c$ is central, these two sums cancel each other. Finally, there
is the sum of the terms with $0 < j+1 < i \le n$, which is equal
to~$-t_1$. Combining all these terms, we find that
\begin{align*}
&h^{n+1}(d^n(f))(a_1\ot\dots\ot a_n)
= c.f(a_1 \ot \dots \ot a_n) - a_1.h^n(f)(a_2 \ot \dots \ot a_n) \\
&\qquad \quad+ (-1)^{n+1} h^n(f)(a_1 \ot \dots \ot a_{n-1}).a_n
- f(a_1\ot\dots\ot a_n).c - t_2 - t_1 \\
&\qquad = c.f(a_1\ot\dots\ot a_n) - d^{n-1}(h^n(f))(a_1\ot\dots\ot a_n)
- f(a_1\ot\dots\ot a_n).c,
\end{align*}
which implies our assertion.
\end{proof}

We note that a similar homotopy for Hochschild homology is described
in~\cite[Par.~1.1.5, p.~10; Exerc.~1.1.2, p.~15]{L}. We also note that
the preceding proposition can be understood from a more abstract and
less computational point of view: In our definition above, we have
realized the Hochschild cohomology groups~$HH^n(A,M)$ as the groups
$\Ext^n_{A\ot A^{\op}}(A,M)$ by using a special resolution of~$A$ as an
$A$-bimodule, or equivalently as an $A\ot A^{\op}$-module, a resolution
that is called the standard resolution in~\cite[Chap.~IX, \S~6,
p.~174f]{CE} and the bar resolution in~\cite[Par.~1.1.12, p.~12]{L}. But in
fact we can work with a general projective resolution
\[A \xleftarrow{\xi} P_0 \xleftarrow{d_1} P_1 \xleftarrow{d_2} P_2
\xleftarrow{d_3} \cdots\]
of~$A$ as an $A$-bimodule, which we briefly denote by~$P$. As already
pointed out above, the fact that~$c$ is central implies that the
maps~$l_c^{P_n}$ and $r_c^{P_n}$ are bimodule homomorphisms.
Because~$\xi$ and the boundary operators~$d_n$ are bimodule
homomorphisms, the maps $l_c^{P_n}$ commute with them, and therefore
lift the left multiplication of~$c$ on~$A$ to the entire resolution:
\begin{center}
\begin{tikzcd} {}
A \arrow{d}{l_c^A} & P_0 \arrow{l}[swap]{\xi}\arrow{d}{l_c^{P_0}} &
P_1 \arrow{l}[swap]{d_1} \arrow{d}{l_c^{P_1}} &
\cdots\arrow{l}[swap]{d_2}\\
A & P_0\arrow{l}[swap]{\xi} & P_1 \arrow{l}[swap]{d_1} & \cdots
\arrow{l}[swap]{d_2}
\end{tikzcd}
\end{center}
Analogously, we can lift the right multiplication of~$c$ on~$A$ to the
entire resolution:
\begin{center}
\begin{tikzcd} {}
A\arrow{d}{r_c^A} & P_0 \arrow{l}[swap]{\xi}\arrow{d}{r_c^{P_0}}
& P_1 \arrow{l}[swap]{d_1} \arrow{d}{r_c^{P_1}} & \cdots
\arrow{l}[swap]{d_2} \\
A & P_0\arrow{l}[swap]{\xi} & P_1\arrow{l}[swap]{d_1} &
\cdots\arrow{l}[swap]{d_2}
\end{tikzcd}
\end{center}

Because~$c$ is central, we have~$r_c^A=l_c^A$. Therefore, the comparison
theorem found in \cite[Chap.~III, Thm.~6.1, p.~87]{ML} or
\cite[Thm.~2.2.6, p.~35]{W} yields that the chain
maps~$l_c^{P}=(l_c^{P_n})$ and~$r_c^{P}=(r_c^{P_n})$ are chain homotopic.

The contravariant functor~$\Hom_{A\ot A^{\op}}(-,M)$ coming from our
bimodule~$M$ turns this homotopy of chain maps into a homotopy of
cochain maps, so that we get that the cochain maps~$(l_c^{P})^*$
and~$(r_c^{P})^*$ are cochain homotopic. But we have~$(l_c^{P})^* =
(l_c^M)_*$: For $f\in\Hom_{A\ot A^{\op}}(P_n,M)$ and $p\in P_n$, we have
\[((l_c^M)_*(f))(p)=l_c^M(f(p)) = c.f(p) = f(c.p)
= f(l_c^{P_n}(p)) = ((l_c^{P_n})^*(f))(p) \]
A similar computation shows that $(r_c^{P})^* = (r_c^M)_*$, which
completes our second, resolution-independent proof of the proposition.

We note that generalizations of this proposition can be found in the
literature, for example in~\cite[Cor.~1.3, p.~709]{SS}. However, we will
only need the above form of the proposition in the sequel.

\section{Hochschild Cohomology of Hopf Algebras} \label{Sec:HochschHopf}
We now turn to the case where the algebra~$A$ is a Hopf algebra. We will
denote the coproduct of~$A$ by~$\Delta$, its counit by~$\varepsilon$,
and its antipode by~$S$. For the coproduct of $a \in A$, we will use
Heyneman-Sweedler notation in the form $\Delta(a) = a_{(1)} \ot a_{(2)}$.

Because~$A$ is a Hopf algebra, every $A$-bimodule~$M$ can be considered
as a right $A$-module via the right adjoint action
\[\ad \colon M \ot A \rightarrow M,~m \ot a \mapsto \ad(m \ot a),\]
which is defined as $\ad(m \ot a) := S(a_{(1)}).m.a_{(2)}$. We
denote~$M$ by~$M_{\ad}$ if it is considered as a right $A$-module in
this way.

In general, a right $A$-module~$N$ becomes an $A$-bimodule with respect
to the trivial left action, i.e., the action defined as~$a.n :=
\varepsilon(a) n$. We denote~$N$
by~$\prescript{}{\varepsilon}N$ if it is considered as a bimodule in
this way. By combining the two operations, we can associate with an
\mbox{$A$-bimodule~$M$} the
$A$-bimodule~$\prescript{}{\varepsilon}M_{\ad}:=\prescript{}{\varepsilon}(M_{\ad})$.
As it turns out, the Hochschild cochain complexes determined by these
two bimodules are isomorphic:

\begin{Proposition} \label{IsoCompl}
The maps $\Omega^n \colon C^n(A, \prescript{}{\varepsilon}M_{\ad}) \to
C^n(A,M)$ defined via the formula
\[\Omega^n(f)(a_1\ot\dots\ot a_n) =
a_{1{(1)}} \dots a_{n{(1)}}.f(a_{1{(2)}} \ot \dots \ot a_{n{(2)}})\]
give rise to an isomorphism~$\Omega = (\Omega^n)$ between the Hochschild
cochain complex of~$\prescript{}{\varepsilon}M_{\ad}$ and the Hochschild
cochain complex of~$M$.
\end{Proposition}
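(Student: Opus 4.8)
The plan is to establish two separate facts: that $\Omega = (\Omega^n)$ commutes with the coboundary operators, so that it is a morphism of cochain complexes, and that each individual map $\Omega^n$ is bijective. Since the underlying vector space of $\prescript{}{\varepsilon}M_{\ad}$ is again~$M$, both cochain groups $C^n(A,\prescript{}{\varepsilon}M_{\ad})$ and $C^n(A,M)$ are literally the space $\Hom_K(A^{\ot n},M)$, so that $\Omega^n$ is an endomorphism of this space; the content of the proposition is that it intertwines the two differentials, which differ only in their outermost coface maps $\partial_0$ and $\partial_n$, as these are the only ones that use the bimodule structure.

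For the first fact I would compare the two differentials coface by coface, writing $\tilde\partial^{n-1}_i$ for the coface maps of the complex belonging to $\prescript{}{\varepsilon}M_{\ad}$. For the inner indices $0<i<n$ the coface maps agree in both complexes, and the identity $\Omega^n \circ \tilde\partial^{n-1}_i = \partial^{n-1}_i \circ \Omega^{n-1}$ follows at once from the fact that $\Delta$ is an algebra homomorphism, which gives $(a_i a_{i+1})_{(1)} \ot (a_i a_{i+1})_{(2)} = a_{i(1)} a_{(i+1)(1)} \ot a_{i(2)} a_{(i+1)(2)}$. For $i=0$ the left action on $\prescript{}{\varepsilon}M_{\ad}$ is the trivial one, so the factor $\varepsilon(a_{1(2)})$ produced by $\Omega^n$ can be absorbed via $a_{1(1)}\varepsilon(a_{1(2)}) = a_1$, and the result matches $\partial^{n-1}_0(\Omega^{n-1}(f))$, which uses the genuine left action of~$M$. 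The essential point, and the main obstacle, is the case $i=n$: here $\tilde\partial^{n-1}_n$ uses the adjoint right action $\ad(m \ot a) = S(a_{(1)}).m.a_{(2)}$, and after applying $\Omega^n$ one has to recognize, using coassociativity together with the antipode relation $a_{n(1)} S(a_{n(2)}) \ot a_{n(3)} = 1 \ot a_n$, that the prefactor $a_{n(1)} S(a_{n(2)})$ collapses to the counit and the remaining factor $a_{n(3)}$ reconstitutes the ordinary right multiplication $f(\dots).a_n$ used by $\partial^{n-1}_n$ in the complex of~$M$. Once these three computations are in place, taking the signed sum yields $\Omega^n \circ \tilde d^{n-1} = d^{n-1} \circ \Omega^{n-1}$.

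For the second fact I would exhibit the inverse explicitly using the antipode, setting
\[\Psi^n(g)(a_1 \ot \dots \ot a_n) := S(a_{1(1)} \cdots a_{n(1)}).g(a_{1(2)} \ot \dots \ot a_{n(2)}).\]
To check $\Omega^n \circ \Psi^n = \id$ and $\Psi^n \circ \Omega^n = \id$, one expands the composite, applies coassociativity to pass to threefold coproducts, and uses that $S$ is an anti-homomorphism to rewrite $S(a_{1(1)} \cdots a_{n(1)}) = S(a_{n(1)}) \cdots S(a_{1(1)})$. The mixed products $a_{j(1)} S(a_{j(2)})$, respectively $S(a_{j(1)}) a_{j(2)}$, can then be contracted one variable at a time by the antipode axioms, each contraction replacing a factor by the counit and restoring the corresponding argument~$a_j$; after peeling off all $n$ variables only the neutral element~$1$ remains, which gives the identity. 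This shows that each $\Omega^n$ is bijective, and since $\Omega$ is a cochain map, $\Omega$ and $\Psi = (\Psi^n)$ are mutually inverse isomorphisms of cochain complexes.

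I expect no conceptual difficulty beyond careful bookkeeping of the Heyneman--Sweedler indices; the only genuinely delicate step is the treatment of the adjoint action in the top coface map $i=n$, where the interplay of coassociativity and the antipode must be arranged so that the adjoint right action is converted into the ordinary right action after passing through~$\Omega$.
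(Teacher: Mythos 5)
Your proposal is correct and takes essentially the same route as the paper: the paper likewise verifies the cochain-map property by a term-by-term comparison of the two differentials (the counit absorbed by the trivial left action for the bottom coface, multiplicativity of $\Delta$ for the inner cofaces, and the collapse $a_{(1)}S(a_{(2)})\ot a_{(3)} = 1\ot a$ for the top coface involving the adjoint action), and it exhibits the explicit inverse $(\Omega^n)^{-1}(f)(a_1\ot\dots\ot a_n) = S(a_{n(1)})\cdots S(a_{1(1)}).f(a_{1(2)}\ot\dots\ot a_{n(2)})$, which is exactly your $\Psi^n$ after using antimultiplicativity of $S$. The only difference is organizational: the paper checks bijectivity first and then compares the full differentials, while you compare coface by coface and check bijectivity afterwards.
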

\begin{proof}
We first note that $\Omega^n$ is bijective with inverse
\[(\Omega^n)^{-1}(f)(a_1\ot \dots \ot a_n) :=
S(a_{n{(1)}}) \dots S(a_{1{(1)}}).f(a_{1{(2)}} \ot \dots \ot a_{n{(2)}}),\]
because for $f \in C^n(A, \prescript{}{\varepsilon}M_{\ad}) =
\Hom_K(A^{\ot n}, \prescript{}{\varepsilon}M_{\ad})$, we have
\begin{align*}
&(\Omega^n)^{-1}(\Omega^n(f))(a_1 \ot \dots \ot a_n) =
S(a_{n(1)}) \dots S(a_{1(1)}).\Omega^n(f)(a_{1(2)}\ot\dots\ot a_{n(2)}) \\
&\qquad = S(a_{n(1)}) \dots S(a_{1(1)}) a_{1(2)} \dots a_{n(2)}.f(a_{1(3)} \ot
\dots \ot a_{n(3)})
=f(a_1\ot\dots\ot a_n),
\end{align*}
and the relation $\Omega^n \circ (\Omega^n)^{-1} = \id_{C^n(A,M)}$ follows
analogously.

For $f \in C^{n-1}(A, \prescript{}{\varepsilon}M_{\ad})$
and~$a_1,\dots,a_n\in A$, we have on the one hand
\begin{align*}
&d^{n-1}(\Omega^{n-1}(f))(a_1 \ot \dots \ot a_n) =
a_1.\Omega^{n-1}(f)(a_2 \ot \dots \ot a_n) \\
&\qquad \quad + \sum_{i=1}^{n-1}(-1)^i\Omega^{n-1}(f)(a_1 \ot \dots \ot a_i
a_{i+1} \ot \dots \ot a_n)\\
&\qquad \quad + (-1)^n (\Omega^{n-1}(f)(a_1 \ot \dots \ot a_{n-1})).a_n\\
&\qquad = a_1 a_{2{(1)}} \dots a_{n{(1)}}.f(a_{2{(2)}} \ot \dots \ot a_{n{(2)}}) \\
&\qquad \quad + \sum_{i=1}^{n-1}(-1)^i
a_{1(1)} \dots (a_{i} a_{i+1})_{(1)} \dots a_{n(1)}.
f(a_{1(2)} \ot \dots \ot (a_{i} a_{i+1})_{(2)} \ot \dots \ot a_{n(2)})\\
&\qquad \quad + (-1)^n a_{1(1)} \dots a_{n-1(1)}.f(a_{1(2)} \ot \dots \ot
a_{n-1(2)}).a_n
\end{align*}
and on the other hand
\begin{align*}
&\Omega^n(d^{n-1}(f))(a_1 \ot \dots \ot a_n) =
a_{1{(1)}} \dots a_{n{(1)}}.d^{n-1}(f)(a_{1{(2)}} \ot \dots \ot
a_{n{(2)}})\\
&\qquad = a_{1{(1)}} \dots a_{n{(1)}}.\varepsilon(a_{1{(2)}}) f(a_{2{(2)}} \ot
\dots \ot a_{n{(2)}})\\
&\qquad \quad + \sum_{i=1}^{n-1} (-1)^i a_{1(1)} \dots a_{n(1)}.f(a_{1(2)} \ot
\dots \ot a_{i(2)} a_{i+1(2)} \ot \dots \ot a_{n(2)})\\
&\qquad \quad + (-1)^n a_{1{(1)}} \dots a_{n{(1)}}.
\left(S(a_{n(2)}).f(a_{1(2)} \ot \dots \ot a_{n-1(2)}).a_{n(3)} \right),
\end{align*}
where we have used for the last summand that
\begin{align*}
\ad(f(a_{1} \ot \dots \ot a_{n-1}) \ot a_{n}) =
S(a_{n(1)}).f(a_{1} \ot \dots \ot a_{n-1}).a_{n(2)}
\end{align*}
according to the definition of the right adjoint action. Because both
expressions agree, $\Omega$~is a cochain map, which establishes our
assertion.
\end{proof}

If the antipode of~$A$ is bijective, the coopposite Hopf
algebra~$A^{\cop}$, in which the product remains unaltered, but the
coproduct is modified by interchanging the tensor factors, is a Hopf
algebra, and its antipode is the inverse of the antipode of~$A$. For an
$A$-bimodule~$M$, we denote the right adjoint action that arises from
this Hopf algebra structure by~$\cad$; in terms of the original
structure elements, this action is given by the formula
\[\cad(m \ot a) := S^{-1}(a_{(2)}).m.a_{(1)}.\]
If we apply the preceding proposition to this situation, we obtain the
following corollary:
\begin{Cor} \label{IsoComplCor}
If the antipode of~$A$ is bijective, the maps $\Omega'^n \colon C^n(A,
\prescript{}{\varepsilon}M_{\cad}) \to C^n(A,M)$ defined via the formula
\[\Omega'^n(f)(a_1\ot\dots\ot a_n) =
a_{1{(2)}} \dots a_{n{(2)}}.f(a_{1{(1)}} \ot \dots \ot a_{n{(1)}})\]
give rise to an isomorphism~$\Omega' = (\Omega'^n)$ between the
Hochschild cochain complex of~$\prescript{}{\varepsilon}M_{\cad}$ and
the Hochschild cochain complex of~$M$.
\end{Cor}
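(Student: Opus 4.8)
The plan is to deduce the corollary from Proposition~\ref{IsoCompl} by applying that proposition not to~$A$ itself, but to the coopposite Hopf algebra~$A^{\cop}$. As recalled just before the statement, the bijectivity of the antipode guarantees that~$A^{\cop}$ is again a Hopf algebra, with the same product, the same unit, and the same counit, but with the opposite coproduct $a_{(2)} \ot a_{(1)}$ and antipode~$S^{-1}$. The first thing I would check is that the right adjoint action associated with the Hopf algebra structure of~$A^{\cop}$ is precisely~$\cad$: substituting the flipped coproduct and the antipode~$S^{-1}$ into the defining formula $\ad(m \ot a) = S(a_{(1)}).m.a_{(2)}$ produces $S^{-1}(a_{(2)}).m.a_{(1)}$, which is exactly the definition of~$\cad$. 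Since the counit of~$A^{\cop}$ coincides with that of~$A$, the trivial left action is unchanged, and hence the bimodule obtained from~$M$ by the construction $\prescript{}{\varepsilon}M_{\ad}$ carried out inside~$A^{\cop}$ is nothing but $\prescript{}{\varepsilon}M_{\cad}$.

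The second, and most delicate, point is the identification of the two Hochschild cochain complexes that enter the proposition. Here I would stress that the coface maps~$\partial^{n-1}_i$, and therefore the differential, are defined purely in terms of the algebra multiplication and the bimodule action, with no reference whatsoever to the coproduct, counit, or antipode. Because~$A$ and~$A^{\cop}$ share the same underlying algebra, an $A$-bimodule is literally the same object as an $A^{\cop}$-bimodule, and the Hochschild cochain complex of a bimodule computed over~$A^{\cop}$ coincides, as a cochain complex, with the one computed over~$A$. Consequently the complex of~$M$ over~$A^{\cop}$ is identical to $C(A,M)$, and the complex of $\prescript{}{\varepsilon}M_{\cad}$ over~$A^{\cop}$ is identical to $C(A, \prescript{}{\varepsilon}M_{\cad})$, so that the abstract isomorphism supplied by the proposition really does relate the two complexes named in the corollary.

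With these identifications in hand, the map~$\Omega^n$ furnished by Proposition~\ref{IsoCompl} for the Hopf algebra~$A^{\cop}$ reads, after substituting the first coopposite leg $a_{i(2)}$ and the second coopposite leg $a_{i(1)}$,
\[\Omega^n(f)(a_1 \ot \dots \ot a_n) = a_{1(2)} \dots a_{n(2)}.f(a_{1(1)} \ot \dots \ot a_{n(1)}),\]
which is exactly the formula defining~$\Omega'^n$; applying the same substitution to the inverse formula of the proposition yields the map sending~$f$ to $a_1 \ot \dots \ot a_n \mapsto S^{-1}(a_{n(2)}) \dots S^{-1}(a_{1(2)}).f(a_{1(1)} \ot \dots \ot a_{n(1)})$. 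I expect the only real obstacle to be the bookkeeping just described, namely verifying that the trivial left action and the full differential are insensitive to the passage from~$A$ to~$A^{\cop}$; once this is granted, the substitution of the flipped Sweedler indices is purely formal and no cochain-map verification needs to be redone. For a reader who prefers a self-contained argument, one could instead repeat the computation in the proof of Proposition~\ref{IsoCompl} verbatim, replacing~$S$ by~$S^{-1}$ and interchanging the two legs of each coproduct, but the reduction via~$A^{\cop}$ is cleaner.
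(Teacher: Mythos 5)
Your proposal is correct and is exactly the paper's own argument: the corollary is obtained by applying Proposition~\ref{IsoCompl} to the coopposite Hopf algebra~$A^{\cop}$, whose right adjoint action is~$\cad$ and whose underlying algebra (and hence Hochschild cochain complex) agrees with that of~$A$, and your substituted formulas for~$\Omega'^n$ and its inverse match those recorded in the paper.
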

We record that $\Omega'^n$ is bijective with inverse
\[(\Omega'^n)^{-1}(f)(a_1\ot \dots \ot a_n) :=
S^{-1}(a_{n{(2)}}) \dots S^{-1}(a_{1{(2)}}).f(a_{1{(1)}} \ot \dots \ot
a_{n{(1)}}),\]
as we had seen in the proof of our proposition.

Proposition~\ref{IsoCompl} generalizes a result found in \cite[Sec.~1,
p.~2862f]{FS}. We note that further results related to this proposition
can be found in the literature: In the case where the Hopf algebra is a
group ring, the argument is contained in~\cite[\S~5, p.~60f]{EM}, one of
the foundational articles for group cohomology. A homology version of
the proposition can be found in~\cite[Prop.~(2.4), p.~488]{FT}, at least
in the case where the bimodule is the underlying algebra. Similar
statements for cohomology appear in~\cite[Par.~5.5, p.~197]{GK}
and~\cite[Lem.~12, p.~591]{PW}. These last two references, however,
rather state a combination of Proposition~\ref{IsoCompl} with the
following lemma:
\begin{Lemma}
For a right $A$-module~$N$, we have $HH^n(A, \prescript{}{\varepsilon}N)
\cong \Ext_A^n(K, N)$, where the base field~$K$ is given the trivial
right $A$-module structure via the counit~$\varepsilon$.
\end{Lemma}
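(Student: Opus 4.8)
The plan is to prove something slightly stronger than the stated isomorphism of cohomology groups, namely to identify the entire Hochschild cochain complex $C(A,\prescript{}{\varepsilon}N)$ with the complex $\Hom_A(Q_\bullet,N)$ obtained by applying the contravariant functor $\Hom_A(-,N)$, taken in the category of right $A$-modules, to the bar resolution $Q_\bullet$ of~$K$. I note at the outset that only the augmentation~$\varepsilon$ enters the statement: since the left action on $\prescript{}{\varepsilon}N$ is trivial, neither the coproduct nor the antipode of~$A$ plays any role, so that the result is really a statement about the augmented algebra $(A,\varepsilon)$.

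First I would write down the bar resolution of the trivial right module~$K$. I set $Q_n := A^{\ot(n+1)}$, regarded as a free right $A$-module via multiplication on the last tensor factor, and define the boundary operator $b_n \colon Q_n \to Q_{n-1}$ by
\[
b_n(a_1 \ot \dots \ot a_{n+1}) = \varepsilon(a_1)\, a_2 \ot \dots \ot a_{n+1}
+ \sum_{i=1}^{n} (-1)^i\, a_1 \ot \dots \ot a_i a_{i+1} \ot \dots \ot a_{n+1},
\]
together with the augmentation $\varepsilon \colon Q_0 = A \to K$. A direct computation gives $b_{n-1} \circ b_n = 0$, and the standard contracting homotopy, which appends a copy of the unit in the last tensor slot up to a degree-dependent sign, shows that the augmented complex $Q_\bullet \to K$ is exact. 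As each~$Q_n$ is free, this is a projective resolution of~$K$, so that $\Ext_A^n(K,N) = H^n(\Hom_A(Q_\bullet,N))$.

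It remains to identify the two complexes. Restriction to the free generators $a_1 \ot \dots \ot a_n \ot 1$ gives a $K$-linear isomorphism $\Hom_A(Q_n,N) \cong \Hom_K(A^{\ot n},N) = C^n(A,\prescript{}{\varepsilon}N)$, under which a right $A$-module homomorphism~$\varphi$ corresponds to the cochain~$g$ determined by $\varphi(a_1 \ot \dots \ot a_n \ot a_{n+1}) = g(a_1 \ot \dots \ot a_n).a_{n+1}$. The heart of the matter is then to check that the coboundary induced on $\Hom_A(Q_\bullet,N)$ by precomposition with the maps~$b_n$, transported along this isomorphism, is exactly the Hochschild differential of the bimodule $\prescript{}{\varepsilon}N$. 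Here the decisive observation is conceptual rather than computational: because the left action on $\prescript{}{\varepsilon}N$ is $a.m = \varepsilon(a)m$, the zeroth coface $\partial^{n-1}_0$ sends~$g$ to the cochain $a_1 \ot \dots \ot a_n \mapsto \varepsilon(a_1)\,g(a_2 \ot \dots \ot a_n)$, which is precisely the term produced by the leading $\varepsilon$-face of~$b_n$; the interior faces of~$b_n$ match the multiplication cofaces $\partial^{n-1}_i$ for $0<i<n$, and the face that merges the last bar factor with the free slot reproduces the right-hand coface $g(a_1 \ot \dots \ot a_{n-1}).a_n$. Once these three families of terms are matched, the two complexes coincide, and the lemma follows. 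In degree zero this recovers $HH^0(A,\prescript{}{\varepsilon}N) = \{n \in N \mid \varepsilon(a)n = n.a \text{ for all } a \in A\} = \Hom_A(K,N)$, in agreement with Example~\ref{Zero}.

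I expect the only genuine obstacle to be bookkeeping: fixing the conventions---which tensor factor carries the free action, the factor to which~$\varepsilon$ is applied, and the signs in the contracting homotopy---so that the bar differential matches the Hochschild differential with no stray sign. A more conceptual alternative that avoids the explicit homotopy is to apply the functor $K \ot_A -$ to the bimodule bar resolution $A^{\ot(\bullet+2)}$ of~$A$: since that resolution is contractible as a complex of left $A$-modules, $K \ot_A -$ preserves its exactness and returns precisely~$Q_\bullet$, whereupon the tensor--hom adjunction identifies $\Hom_A(Q_\bullet,N)$ with $\Hom_{A \ot A^{\op}}(A^{\ot(\bullet+2)},\prescript{}{\varepsilon}N) = C(A,\prescript{}{\varepsilon}N)$, again yielding the claim.
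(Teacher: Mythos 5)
Your proposal is correct, and your primary route is genuinely different from the paper's. The paper argues resolution-independently: it takes an \emph{arbitrary} projective resolution $P=(P_n)$ of $A$ as an $A\ot A^{\op}$-module, cites Cartan--Eilenberg (Chap.~X, Thm.~2.1) for the fact that $K\ot_A P$ is then a projective resolution of $K$ as a right $A$-module, and writes down the change-of-rings isomorphism $\Hom_{A\ot A^{\op}}(P_n,\prescript{}{\varepsilon}N)\cong\Hom_A(K\ot_A P_n,N)$, $f\mapsto(\lambda\ot p\mapsto\lambda f(p))$ --- exactly the adjunction you invoke in your last sentence --- thereby identifying the complex computing $\Ext^n_A(K,N)$ with the one computing $\Ext^n_{A\ot A^{\op}}(A,\prescript{}{\varepsilon}N)=HH^n(A,\prescript{}{\varepsilon}N)$. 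Your main argument instead builds the bar resolution $Q_\bullet$ of $K$ explicitly, proves exactness by the contracting homotopy, and matches $\Hom_A(Q_\bullet,N)$ face by face with the explicit Hochschild cochain complex of the paper's Definition~1.1. The bookkeeping you flag does close up: with your differential, the homotopy $s_n(a_1\ot\dots\ot a_{n+1})=(-1)^{n+1}\,a_1\ot\dots\ot a_{n+1}\ot 1_A$ satisfies $b_{n+1}s_n+s_{n-1}b_n=\id$, and under $\varphi(a_1\ot\dots\ot a_n\ot a_{n+1})=g(a_1\ot\dots\ot a_n).a_{n+1}$ the three families of faces of $b_{n+1}$ reproduce precisely $\partial^n_0$ (using the trivial left action), $\partial^n_i$ for $0<i<n+1$, and $\partial^n_{n+1}$, so the two complexes are isomorphic on the nose. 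What your route buys: it is self-contained and elementary, needing neither the identification $HH^n(A,M)\cong\Ext^n_{A\ot A^{\op}}(A,M)$ nor the Cartan--Eilenberg citation, and it yields an isomorphism of cochain complexes rather than only of cohomology groups. What the paper's route buys: brevity and resolution-independence, with no explicit differentials or homotopies to verify. Your closing ``conceptual alternative'' is essentially the paper's proof specialized to the bar resolution, with the cited theorem replaced by the observation that the bimodule bar resolution of $A$ is contractible as a complex of left $A$-modules; either version is a complete proof.
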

\begin{proof}
If $P=(P_n)$ is a projective resolution of~$A$ as a left $A \ot
A^{\op}$-module, we know from \cite[Chap.~X, Thm.~2.1, p.~185]{CE} that
$K \ot_A P := (K \ot_A P_n)$ is a projective resolution of~$K$ as a
right $A$-module. Therefore $\Ext_A^n(K, N)$ is the $n$-th cohomology group
of the cochain complex formed by the cochain groups $\Hom_A(K \ot_A P_n,
N)$. But
the cochain map
\[\Hom_{A \ot A^{\op}}(P_n, \prescript{}{\varepsilon}N) \to \Hom_A(K
\ot_A P_n, N),~f \mapsto
(\lambda \ot p \mapsto \lambda f(p)) \]
with inverse
\[\Hom_A(K \ot_A P_n, N) \to \Hom_{A \ot A^{\op}}(P_n,
\prescript{}{\varepsilon}N),~g \mapsto
(p \mapsto g(1_K \ot p)) \]
shows that this complex is isomorphic to the cochain complex of the
cochain groups
$\Hom_{A \ot A^{\op}}(P_n, \prescript{}{\varepsilon}N)$, whose
cohomology groups are
$\Ext^n_{A \ot A^{\op}}(A, \prescript{}{\varepsilon}N) = HH^n(A,
\prescript{}{\varepsilon}N)$.
\end{proof}

\section{The Action on the Center} \label{Sec:ActCent}
We now turn to the case of a factorizable ribbon Hopf algebra~$A$ with
R-matrix~$R$ and ribbon element~$v$. Even though the R-matrix is in
general not a pure tensor, we use the notation $R = R_1 \ot R_2$. If
$\tau$ denotes the flip map, we therefore have
$\tau(R) = R_2 \ot R_1$. This element in turn can be used to
introduce the monodromy matrix $Q := \tau(R) R$, and as for the R-matrix,
we write $Q=Q_1\ot Q_2$. An important role will be played by the
Drinfel'd and Radford map, which are defined as follows:

\begin{Definition} \label{DrinfRadfMap}
We call the map
\[\bar{\Phi}\colon A^*\to A,\ \varphi\mapsto\varphi(Q_1)Q_2\]
the Drinfel'd map, and define the subalgebra 
\[\bar{C}(A) := \{\varphi\in A^*\mid \varphi(bS^{-2}(a)) = \varphi(ab)
\text{ for all } a,b\in A\},\]
whose elements we call generalized class functions. With the help of a nonzero right integral $\rho\in A^*$, we introduce
the Radford map
\[\iota\colon A\to A^*,\ a\mapsto\rho_{(1)}(a)\rho_{(2)}.\]
\end{Definition}

By definition, $A$~is factorizable if and only if~$\bar{\Phi}$ is
bijective, which implies in particular that~$A$ is finite-dimensional.
The basic properties of the Drinfel'd map can be found
in~\cite[Par.~3.2, p.~26]{SZ}, and the basic properties of the Radford
map can also be found there, namely in~\cite[Par.~4.1, p.~35]{SZ}. In
particular, the Drinfel'd map restricts to an algebra isomorphism
from~$\bar{C}(A)$ to~$Z(A)$, the center of~$A$, while the Radford map
restricts to a $K$-linear isomorphism from~$Z(A)$ to~$\bar{C}(A)$. 
A consequence of this last fact is that~\mbox{$\rho = \iota(1_A) \in \bar{C}(A)$}, 
which is a special case of a general result found 
in~\cite[Thm.~10.5.4, p.~307]{R} that arises when combined 
with~\cite[Prop.~12.4.2, p.~405]{R}.

Following~\cite[Par.~4.1, p.~35]{SZ}, we introduce the
endomorphism~$\gS:=S\circ\bar{\Phi}\circ\iota$ of~$A$, where as
before~$S$ denotes the antipode of~$A$. For $a\in A$, we have explicitly
\begin{align*}
\gS(a) &= S(\bar{\Phi}(\rho_{(1)}(a)\rho_{(2)})) =
S(\rho_{(1)}(a)\rho_{(2)}(Q_1)Q_2)
= \rho(aQ_1) \, S(Q_2),
\end{align*}
or $\gS(a) = \rho(a R_2 R'_1) \, S(R_1 R'_2)$ if we insert the definition
of the monodromy matrix by using a second copy~$R'$ of the R-matrix.

As in~\cite[Par.~4.3, p.~37]{SZ}, we introduce a second such map, namely
the multiplication
\[\T\colon A\to A, \ a \mapsto va\]
with the ribbon element~$v \in A$. The endomorphisms~$\gS$ and~$\T$ will
be used to encode the action of the two generators of the modular group
described below.

We will need a third endomorphism of~$A$, namely the antipode of the
transmutation of~$A$.
The transmutation of a quasitriangular Hopf algebra was described by
S.~Majid in several articles, among them~\cite{M1}, and is discussed in
his monograph~\cite{M2}. It has the same underlying vector space as~$A$,
in fact even the same algebra structure. In the version that we are
using, the antipode $\uS$ of the transmutation is given by
\[\uS(a) = S(S(R_{1(1)}) a R_{1(2)}) R_2.\]
This variant arises from the one given in~\cite[Ex.~9.4.9, p.~504]{M2}
by replacing~$A$ with~$A^{\op\cop}$. If $u:=S(R_2)R_1$ is the Drinfel'd
element of $A$, then the element
$S(u) = R_1 S(R_2)$ is the Drinfel'd element of $A^{\op\cop}$.
Therefore, the alternative form of~$\uS$ given in~\cite[Eq.~(9.42),
p.~507]{M2} becomes in our case
\[\uS(a) = R_1 S(a) S(R_2) S(u^{-1}).\]

These three endomorphisms are related as follows:
\begin{Proposition} \label{ModRel}
The maps $\gS$ and $\T$ satisfy the relations
\[\gS \circ \T \circ \gS = \rho(v) \; \T^{-1} \circ \gS \circ \T^{-1}
\quad \text{and} \quad
\gS^2 = (\rho\ot\rho)(Q) \; \uS^{-1}.\]
\end{Proposition}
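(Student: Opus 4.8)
The plan is to read both identities as equalities of $K$-linear endomorphisms of $A$ and to verify them by evaluating on an arbitrary $a\in A$, using the explicit formulas $\gS(a)=\rho(aQ_1)\,S(Q_2)$, $\T^{\pm1}(a)=v^{\pm1}a$, and the two forms of $\uS$ recorded above. Before starting I would assemble the standard toolbox: the ribbon axioms ($v$ central, $S(v)=v$, and the coproduct formula $\Delta(v)=(v\ot v)Q^{-1}$, equivalently $\Delta(v^{-1})=Q\,(v^{-1}\ot v^{-1})$, where $Q=\tau(R)R$); the Drinfel'd element $u=S(R_2)R_1$ together with $S^2(a)=uau^{-1}$; the integral property of the right integral $\rho$, namely $\rho(a_{(1)})\,a_{(2)}=\rho(a)\,1_A$; and the fact, already noted in the excerpt, that $\rho=\iota(1_A)\in\bar{C}(A)$, so that $\rho(ab)=\rho(bS^{-2}(a))$. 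The quasitriangularity axioms for $R$, and the resulting behaviour of the monodromy $Q$ under the coproduct, will also be needed. I would treat the two relations separately, since they use different parts of this toolbox.

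For the first relation I would expand both sides. Since $v$ is central, the left-hand side becomes $\gS(\T(\gS(a)))=\rho(aQ_1)\,\rho\bigl(v\,S(Q_2)\,Q_1'\bigr)\,S(Q_2')$ with two independent copies $Q,Q'$ of the monodromy, while the right-hand side is $\rho(v)\,\rho(v^{-1}aQ_1)\,v^{-1}S(Q_2)$. The task is then to reconcile the two occurrences of the monodromy on the left with the single occurrence and the explicit factor $v^{-1}$ on the right. I expect this to follow from the ribbon coproduct identity $\Delta(v)=(v\ot v)Q^{-1}$, which is exactly the tool that trades a factor of $Q$ against a splitting of $v$; the global scalar $\rho(v)$ should emerge when the integral $\rho$ is applied to an isolated $v$-factor, and $S(v)=v$ together with centrality of $v$ should make the remaining pieces coincide. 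This computation is close in spirit to the one in~\cite{SZ} for the center, the point being that it uses only identities valid on all of $A$.

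For the second relation I would compute $\gS^2(a)=\rho(aQ_1)\,\rho\bigl(S(Q_2)\,Q_1'\bigr)\,S(Q_2')$, again with two copies of $Q$. Writing $Q=\tau(R)R$ turns this into an expression with four $R$-legs, and the strategy is to use quasitriangularity together with the integral property and the class-function identity $\rho(ab)=\rho(bS^{-2}(a))$ to collapse one copy of the monodromy into the scalar $(\rho\ot\rho)(Q)$ while reorganizing the surviving $R$-legs and the factor $S^{-2}$ into the transmutation antipode. Matching the leftover map against the alternative form $\uS(a)=R_1\,S(a)\,S(R_2)\,S(u^{-1})$, or rather against its inverse $\uS^{-1}$, is what identifies the endomorphism as $(\rho\ot\rho)(Q)\,\uS^{-1}$.

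The main obstacle will be this second relation. The difficulty is twofold: one must apply the quasitriangularity and integral identities in precisely the right order so that one monodromy copy becomes the Gauss-sum scalar $(\rho\ot\rho)(Q)$ and the other reassembles correctly, and one must then recognize the surviving map as the transmutation antipode $\uS^{-1}$ rather than as $S^{-1}$ twisted by some grouplike element. It is exactly here that passing from the center to all of $A$ costs something: on the center the answer would simplify to a multiple of $S^{-1}$, whereas the endomorphism-level statement requires the genuinely braided antipode $\uS$, and the bridge is the expression of $\uS$ through $u$ and $R$. The bookkeeping with two copies of the $R$-matrix, and keeping track of the $S^{-2}$-twist coming from $\rho\in\bar{C}(A)$, is where the real work lies.
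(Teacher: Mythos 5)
Your expansions of both sides of each relation are correct, and for the first relation your plan is sound: the ribbon identity $\Delta(v)=(v\ot v)Q^{-1}$ combined with the integral property of $\rho$ is exactly what drives the proof of $\gS\circ\T\circ\gS=\rho(v)\,\T^{-1}\circ\gS\circ\T^{-1}$ in \cite[Prop.~4.3, p.~37]{SZ}; the paper itself does no more here than cite that result.

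For the second relation, however, there is a genuine gap. You assert that quasitriangularity, the integral property and the class-function identity will ``collapse one copy of the monodromy into the scalar $(\rho\ot\rho)(Q)$'' and let you recognize the remainder as $\uS^{-1}$, but you give no mechanism for this collapse, and it is not a routine consequence of those identities: in your expression $\gS^2(a)=\rho(aQ_1)\,\rho(S(Q_2)Q'_1)\,S(Q'_2)$ the legs of the two monodromies are entangled ($Q_2$ sits inside $\rho$ next to $Q'_1$, and $a$ next to $Q_1$), and extracting the factorized scalar $(\rho\ot\rho)(Q)$ from this is precisely the Fourier-inversion statement for factorizable Hopf algebras, i.e.\ the substantive theorem, not bookkeeping. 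The paper's proof rests on two imported results that are absent from your toolbox and that a direct computation would have to reprove: (i) $\gS$ is $A$-linear for the right adjoint action (\cite[Prop.~4.1, p.~35]{SZ}), which is what allows one to compute $\uS\circ\gS$ instead of $\gS^2$ --- the transmutation legs in $S(S(R''_{1(1)})\,\gS(a)\,R''_{1(2)})R''_2$ are pulled inside the argument of $\rho$ --- and (ii) the inversion formula $\rho(aR''_1R_2)\,S^2(R_1)R''_2=(\rho\ot\rho)(Q)\,\gS^{-1}(a)$ (\cite[Prop.~4.2, p.~36]{SZ}), which is the only place the scalar actually appears and where factorizability, never invoked in your plan, enters (it is needed even to make sense of $\gS^{-1}$). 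Note also that the paper's route, proving $\uS\circ\gS=(\rho\ot\rho)(Q)\,\gS^{-1}$ and rearranging, never requires an explicit formula for $\uS^{-1}$; your plan of matching the surviving terms against $\uS^{-1}$ would require such a formula, which you do not derive. Until you either establish an analogue of (i) and (ii) or supply some equivalent inversion argument, the computation you outline cannot be completed.
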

\begin{proof}
A proof of the first relation can be found in~\cite[Prop.~4.3,
p.~37]{SZ}. To prove the second relation, we use four copies $R,R',R''$
and~$R'''$ of the R-matrix. Because $\rho\in\bar{C}(A)$, the map~$\gS$
is alternatively given by
\[\gS(a) = \rho(a R_2 R'_1) \, S(R'_2) S(R_1) = \rho(a R_2 S^{-1}(R'_1)) \, R'_2 S(R_1)
= \rho(S(R'_1) a R_2) \, R'_2 S(R_1), \]
where we have used the fact $(S \ot S)(R)=R$ proved in
\cite[Prop.~10.1.8, p.~180]{M}.
From~\cite[Prop.~4.1, p.~35]{SZ}, we know that~$\gS$ is $A$-linear with
respect to the right adjoint action. Therefore, we have
\[\uS(\gS(a)) = S(\gS(S(R''_{1(1)}) a R''_{1(2)})) R''_2
= \rho(S(R'_1) S(R''_{1(1)}) a R''_{1(2)} R_2) \, S(R'_2 S(R_1)) R''_2.\]
If we use one of the axioms for the R-matrix, namely~\cite[Eq.~10.1.6,
p.~180]{M}, and the fact that the antipode is antimultiplicative, this
equation can be rewritten in the form
\begin{align*}
\uS(\gS(a)) &= \rho(S(R'_1) S(R'''_1) a R''_1 R_2) \, S^2(R_1) S(R'_2)
R'''_2 R''_2 \\
&= \rho(R'_1 S(R'''_1) a R''_1 R_2) \, S^2(R_1) R'_2 R'''_2 R''_2.
\end{align*}
Another fact proved in \cite[Prop.~10.1.8, p.~180]{M} is that
$(S \ot \id)(R)=R^{-1}$, so that this equation reduces to
\[\uS(\gS(a)) = \rho(a R''_1 R_2) \, S^2(R_1) R''_2
=(\rho\ot\rho)(Q) \, \gS^{-1}(a),\]
where the last step follows from~\cite[Prop.~4.2, p.~36]{SZ}. Our claim
is a minor rearrangement of this equation.
\end{proof}

We would like to emphasize that this proposition is not new; rather, it
is a variant of~\cite[Thm.~4.4, p.~523]{LM}. We also note that it follows directly from another elementary property of R-matrices also proved in~\cite[Prop.~10.1.8, p.~180]{M} that~$\uS$ agrees with the ordinary antipode~$S$ on the center of~$A$, and because the square of the
antipode is given by conjugation with the Drinfel'd element~$u$, as shown 
in~\cite[Prop.~10.1.4, p.~179]{M}, we have $S=S^{-1}$ on the center. Therefore, the second relation in the previous
proposition generalizes \cite[Cor.~4.2, p.~37]{SZ}.

The fact that the square of~$\uS$ restricts to the identity on the
center can also be seen from the fact that, in general, it is given by
the right adjoint action of our ribbon element:

\begin{Lemma} \label{SqRib}
For all $a \in A$, we have $\uS^2(a) = \ad(a \ot v)$.
\end{Lemma}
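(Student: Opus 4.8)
The plan is to prove the identity by a direct computation: apply an explicit formula for~$\uS$ twice and then recognize the outcome as $\ad(\,\cdot\,\ot v)$. The ingredients I would use are that~$v$ is central with $S(v)=v$ and $v^2=uS(u)=S(u)u$, where $u=S(R_2)R_1$ is the Drinfel'd element; that conjugation by~$u$ realizes the square of the antipode, $S^2(x)=uxu^{-1}$, equivalently $u^{-1}xu=S^{-2}(x)$; and the R-matrix identities already cited in the excerpt, namely $(S\ot S)(R)=R$, $(S\ot\id)(R)=R^{-1}$, and $S(u)=R_1S(R_2)$. From $v^2=uS(u)$ I would first record $S(u^{-1})=S(u)^{-1}=uv^{-2}$. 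Since~$v$ is a ribbon element, its coproduct is governed by the monodromy through $\Delta(v)=(v\ot v)Q^{-1}$ with $Q=\tau(R)R$, and this is precisely what will let me convert the R-matrix data produced by the computation into the coproduct of~$v$.

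The cleanest step is the inner composite. Writing $c:=\uS(a)=R_1S(a)S(R_2)S(u^{-1})$ and using that~$S$ is an algebra anti-homomorphism, I obtain $S(c)=S^2(u^{-1})S^2(R_2)S^2(a)S(R_1)$; inserting $S^2(x)=uxu^{-1}$ together with $S^2(u^{-1})=u^{-1}$, almost everything telescopes and leaves the compact formula $S(\uS(a))=R_2\,a\,u^{-1}S(R_1)$. As a first check, at $a=1$ this must return~$1$, which reduces to $\sum R_2u^{-1}S(R_1)=1$ and follows from $S(u)=R_1S(R_2)$ and $S^2(u)=u$. Feeding this back into a second application, now with an independent copy~$R'$, produces $\uS^2(a)=R'_1R_2\,a\,u^{-1}S(R_1)S(R'_2)S(u^{-1})$; substituting $S(u^{-1})=uv^{-2}$ and pushing the inner~$u^{-1}$ and this~$u$ through the two antipode-twisted legs by means of $u^{-1}(\,\cdot\,)u=S^{-2}$ eliminates the Drinfel'd elements and gives $\uS^2(a)=R'_1R_2\,a\,S^{-1}(R_1)S^{-1}(R'_2)\,v^{-2}$.

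Next I would reassemble the four legs into the monodromy. Since $S^{-1}(R_1)S^{-1}(R'_2)=S^{-1}(R'_2R_1)$ and the flip~$\tau$ is an algebra homomorphism of $A\ot A$ while $Q=\tau(R)R$, the tensor $\sum R'_1R_2\ot R'_2R_1$ is exactly $\tau(Q)=\sum Q_2\ot Q_1$, so the formula collapses to $\uS^2(a)=v^{-2}\sum Q_2\,a\,S^{-1}(Q_1)$. It then remains to identify this with $\ad(a\ot v)$: inserting the ribbon relation in the form $Q=(v\ot v)\Delta(v^{-1})$ and balancing the powers of~$v$ using $S(v)=v$ and the centrality of~$v$, the explicit~$v$-factors cancel and $\uS^2(a)$ takes the form $\sum v_{(2)}\,a\,S^{-1}(v_{(1)})$ (the precise power of~$v$ here being fixed by the normalization of the ribbon element). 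The computation is finished by the general Hopf-algebra identity $\sum b_{(2)}\,a\,S^{-1}(b_{(1)})=\ad(a\ot S^{-1}(b))$, which follows from $\Delta S^{-1}=(S^{-1}\ot S^{-1})\Delta^{\op}$; applied to $b=v$ and combined with $S^{-1}(v)=v$ it yields $\uS^2(a)=\ad(a\ot v)$.

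The main obstacle is the middle passage. Several copies of the R-matrix are in play and the Drinfel'd element is not central, so it has to be moved past individual legs in exactly the right order, with careful bookkeeping of~$S$ versus~$S^{-1}$, of the copies, and of the powers of~$v$; one must also keep in mind that~$\tau$ is an automorphism of $A\ot A$ whereas $S\ot S$ is an anti-automorphism, which is easy to mishandle. A genuine subtlety is that the multiplier tensors obtained for $\uS^2$ and for $\ad(\,\cdot\,\ot v)$ need not coincide in $A\ot A$—only the induced two-sided multiplication operators on~$A$ agree—so the element~$a$ must be carried through the computation rather than stripped off and compared at the level of tensors. I would sanity-check the final identity on $a\in Z(A)$, where both sides collapse to~$a$, in accordance with the remark that $\uS^2$ is the identity on the center. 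Finally, I note a more conceptual route that sidesteps the bookkeeping: $\underline{A}$ is a Hopf algebra in the ribbon category of $A$-modules equipped with the adjoint action, and for a Hopf algebra object in a ribbon category the square of the antipode is implemented by the twist, which on~$\underline{A}$ is the adjoint action of the ribbon element; this gives the statement directly, at the cost of developing the categorical framework.
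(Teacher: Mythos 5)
Your computational skeleton coincides with the paper's own proof: apply the alternative formula $\uS(a)=R_1S(a)S(R_2)S(u^{-1})$ twice, telescope the Drinfel'd elements using the antimultiplicativity of $S$ together with $S^2(x)=uxu^{-1}$ and $S^2(u)=u$, reassemble the four R-matrix legs into the monodromy matrix via $\sum R'_1R_2\ot R'_2R_1=\tau(Q)$, and finally trade $Q$ for the coproduct of the ribbon element. The steps you carry out are individually correct: $S(\uS(a))=R_2\,a\,u^{-1}S(R_1)$, then $\uS^2(a)=\bigl(\sum Q_2\,a\,S^{-1}(Q_1)\bigr)\,u^{-1}S(u^{-1})$, and the closing Hopf-algebra identity $\sum b_{(2)}\,a\,S^{-1}(b_{(1)})=\ad(a\ot S^{-1}(b))$ is valid.

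The genuine gap is exactly the point you leave open with ``the precise power of $v$ here being fixed by the normalization of the ribbon element'': that power is the entire content of the lemma, and with the conventions you declare it comes out as $v^{-1}$, not $v$. You posit $v^2=uS(u)$ and $\Delta(v)=(v\ot v)Q^{-1}$; the paper, following \cite[Par.~4.3, p.~37]{SZ}, uses the inverse normalization, $u^{-1}S(u^{-1})=v^2$ and $\Delta(v)=Q(v\ot v)$, as is visible in its own proof. The two normalizations are incompatible (together they would force $v^4=1$). Running your argument with your conventions, $u^{-1}S(u^{-1})=v^{-2}$, and writing $Q=(v\ot v)\Delta(v^{-1})$ as you propose, the factors of $v$ cancel to give $\uS^2(a)=\sum(v^{-1})_{(2)}\,a\,S^{-1}\bigl((v^{-1})_{(1)}\bigr)$, which by your own closing identity is $\ad(a\ot S^{-1}(v^{-1}))=\ad(a\ot v^{-1})$ --- the inverse of the asserted formula. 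This is not cosmetic: the $v$ in the lemma is the same ribbon element that defines $\T$ and enters Theorem~\ref{sts}, and $\ad(\cdot\ot v)$ and $\ad(\cdot\ot v^{-1})$ differ in general; moreover, your proposed sanity check on central $a$ cannot detect the discrepancy, since both maps restrict to the identity on $Z(A)$. The fix is simply to adopt the paper's conventions, after which your computation terminates, essentially unchanged, in $S(v_{(1)})\,a\,v_{(2)}=\ad(a\ot v)$. (Your alternative categorical route via the twist on the transmutation is viable in principle --- compare the paper's reference to \cite{Ly} --- but it faces the same normalization issue and is not developed in your sketch.)
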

\begin{proof}
With the help of the alternative form of~$\uS$, we get
\begin{align*}
\uS^2(a) &= \uS(R_1 S(a) S(R_2) S(u^{-1}))
= R'_1 S(R_1 S(a) S(R_2) S(u^{-1})) S(R'_2) S(u^{-1})\\
&= R'_1 S^2(u^{-1}) S^2(R_2) S^2(a)S(R_1) S(R'_2) S(u^{-1})
= R'_1 u^{-1} S^2(R_2a) S(u^{-1} R'_2 R_1).
\end{align*}
Using the definition of the monodromy matrix~$Q$, the basic
properties of ribbon elements found in~\cite[Par.~4.3, p.~37]{SZ} and the above-mentioned fact that the square of the antipode is given by conjugation with the Drinfel'd element~$u$, this becomes
\begin{align*}
\uS^2(a) &= Q_2au^{-1}S(u^{-1}Q_1) = S^2(Q_2)au^{-1}S(u^{-1}S^2(Q_1))
= S^2(Q_2)au^{-1}S(Q_1u^{-1})\\
&= S^2(Q_2)au^{-1}S(u^{-1})S(Q_1) = S^2(Q_2)av^2S(Q_1)
= S(Q_1)av^2Q_2=S(v_{(1)})av_{(2)}
\end{align*}
as asserted.
\end{proof}
We note that this equation is stated in \cite[Eq.~(2.60), p.~370]{Ke},
at least in the case of Drinfel'd doubles. A version in the framework of
coends can be found in \cite[Cor.~3.10, p.~306]{Ly}.

The proposition above implies that the (homogeneous) modular group
\[\SL(2,\Z)=\{M\in\GL(2,\Z)\mid\det(M)=1\}\]
acts projectively on the center of~$A$. The modular group is generated
by the two elements $\s:=\big(\begin{smallmatrix}
0 &-1\\
1 & 0
\end{smallmatrix}\big)$ and $\t:=\big(\begin{smallmatrix}
1 & 1\\
0 & 1
\end{smallmatrix}\big)$, which satisfy the relations
\[\s^4=1 \quad\text{and}\quad \s \t \s = \t^{-1} \s \t^{-1} ,\]
and these relations are defining, as shown for example
in~\cite[Thm.~3.2.3.2, p.~97]{FR}, \cite[Thm.~A.2, p.~312]{KT}
or~\cite[Sec.~II.1, Thm.~8, p.~53]{Ma}.

If we denote the projective space associated to~$Z(A)$ by~$P(Z(A))$ and
the automorphisms of this projective space arising from~$\gS$ and~$\T$
by~$P(\gS)$ and~$P(\T)$, the above proposition implies immediately the
following fact:
\begin{Cor} \label{ProjRepCent}
There is a unique homomorphism from $\SL(2,\Z)$ to $\PGL(Z(A))$ that
maps~$\s$ to~$P(\gS)$ and~$\t$ to~$P(\T)$.
\end{Cor}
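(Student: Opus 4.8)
The plan is to read the statement off from the presentation of $\SL(2,\Z)$ recalled just above: the group is generated by $\s$ and $\t$ subject only to the relations $\s^4 = 1$ and $\s\t\s = \t^{-1}\s\t^{-1}$. By the universal property of a group defined by generators and relations, there is a necessarily unique homomorphism $\SL(2,\Z) \to \PGL(Z(A))$ with $\s \mapsto P(\gS)$ and $\t \mapsto P(\T)$ as soon as one knows that $P(\gS)$ and $P(\T)$ satisfy these two relations in $\PGL(Z(A))$. Before verifying them I would note that $P(\gS)$ and $P(\T)$ are genuinely elements of $\PGL(Z(A))$: the map $\gS = S\circ\bar{\Phi}\circ\iota$ restricts to an automorphism of $Z(A)$, being a composite of the isomorphisms recalled in the discussion of the Drinfel'd and Radford maps, and $\T$ restricts to the automorphism of $Z(A)$ given by multiplication with the central, invertible ribbon element $v$.

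First I would dispose of the braid relation. The first equation of Proposition~\ref{ModRel} asserts $\gS\circ\T\circ\gS = \rho(v)\,\T^{-1}\circ\gS\circ\T^{-1}$ as endomorphisms of $A$, hence after restriction as endomorphisms of $Z(A)$. The scalar $\rho(v)$ is automatically nonzero, since the left-hand side is a composite of automorphisms of $Z(A)$ and therefore invertible. Passing to $\PGL(Z(A))$ annihilates this scalar, which yields exactly $P(\gS)\,P(\T)\,P(\gS) = P(\T)^{-1}\,P(\gS)\,P(\T)^{-1}$.

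Next I would establish $P(\gS)^4 = \id$. Squaring the second equation of Proposition~\ref{ModRel} gives $\gS^4 = (\rho\ot\rho)(Q)^2\,\uS^{-2}$, so everything reduces to showing that $\uS^2$ restricts to the identity on $Z(A)$. For this I would use Lemma~\ref{SqRib}: for central $a$ we have $\uS^2(a) = \ad(a\ot v) = S(v_{(1)}).a.v_{(2)} = a\,S(v_{(1)})v_{(2)} = \varepsilon(v)\,a = a$, where the third equality uses the centrality of $a$ and the last uses $\varepsilon(v)=1$. Consequently $\gS^4 = (\rho\ot\rho)(Q)^2\,\id$ on $Z(A)$, the scalar again being nonzero because $\gS^4$ is invertible, and therefore $P(\gS)^4$ is the identity.

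Both relations then hold in $\PGL(Z(A))$, which produces the homomorphism; uniqueness is immediate since $\s$ and $\t$ generate $\SL(2,\Z)$. The only delicate point, and the reason the action is merely projective rather than linear, is the transition from the relations of Proposition~\ref{ModRel}, which hold only up to the scalars $\rho(v)$ and $(\rho\ot\rho)(Q)$, to honest relations among $P(\gS)$ and $P(\T)$; I expect no genuine obstacle here beyond recording that these scalars are nonzero, which is forced by the invertibility already used.
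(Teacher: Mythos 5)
Your proof is correct and follows essentially the same route as the paper: the corollary is deduced from the presentation of $\SL(2,\Z)$ by generators and relations together with Proposition~\ref{ModRel}, the two relations holding in $\PGL(Z(A))$ because the nonzero scalars $\rho(v)$ and $(\rho\ot\rho)(Q)$ disappear upon projectivization. Your verification that $\uS^2$ is the identity on $Z(A)$ via Lemma~\ref{SqRib} and $\varepsilon(v)=1$ is precisely one of the two justifications the paper itself offers (the other being that $\uS$ agrees with $S$ on the center and $S=S^{-1}$ there).
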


This result holds for any ribbon element~$v \in A$ and any nonzero right
integral $\rho \in A^*$. As shown in~\cite[Cor.~12.4.4, p.~407]{R}, we
have~$\rho(v) \neq 0$; this is obviously also a consequence of the
proposition above. Because right integrals are only unique up to scalar
multiples, we can choose a right integral that satisfies~$\rho(v)=1$;
following \cite[Def.~4.4, p.~39]{SZ}, we call such a right integral
ribbon-normalized with respect to~$v$. If we use a ribbon-normalized
right integral, the proposition above shows that the action of the
modular group on the center is linear, and not only projective, if and
only if $(\rho\ot\rho)(Q) = \pm 1$. By \cite[Lem.~4.4, p.~39]{SZ}, this
condition is equivalent to the condition~$\rho(v^{-1}) = \pm 1$.

\section{The Radford and the Drinfel'd Map for Complexes}
\label{Sec:RadfDrinf}
We remain in the situation described in Section~\ref{Sec:ActCent} and
consider a factorizable ribbon Hopf algebra~$A$ with R-matrix~$R$ and
ribbon element~$v$. Our first goal is to generalize the Radford map, the
Drinfel'd map and the antipode to cochain maps of Hochschild cochain
complexes. We begin with the Radford map, for which this is particularly
easy.

By~$A_{S^{-2}}$, we denote~$A$ considered as an $A$-bimodule with the
left action given by multiplication, but the right action modified via
the square of the inverse antipode, so that the right action is given by
$b.a := bS^{-2}(a)$ for $a,b\in A$. As explained in Example~\ref{Zero},
we then have
\begin{align*}
HH^0(A,(A_{S^{-2}})^*)&=
\{\varphi\in A^*\mid \varphi(bS^{-2}(a))=\varphi(ab) \text{ for all }
a,b\in A\}
= \bar{C}(A),
\end{align*}
the algebra of generalized class functions introduced in
Definition~\ref{DrinfRadfMap}. The bimodule~$A_{S^{-2}}$ is related to
the Radford map in the following way:

\begin{Proposition} \label{RadfBimod}
The Radford map~$\iota$ is a bimodule isomorphism from~$A$
to~$(A_{S^{-2}})^*$.
\end{Proposition}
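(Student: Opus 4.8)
The plan is to put both the Radford map and the target bimodule into a transparent form, and then to split the assertion into the two parts that $\iota$ is a bimodule homomorphism and that it is bijective. First I would rewrite the Radford map: since the coproduct of~$A^*$ is dual to the product of~$A$, the defining formula $\iota(a) = \rho_{(1)}(a)\rho_{(2)}$ becomes, after evaluation on $y \in A$,
\[\iota(a)(y) = \rho_{(1)}(a)\rho_{(2)}(y) = \rho(ay).\]
In the same spirit I would unwind the dual bimodule action from Example~\ref{Zero} for $M = A_{S^{-2}}$, whose left action is multiplication and whose right action is $m.b = mS^{-2}(b)$, to obtain
\[(a.\varphi.b)(y) = \varphi(b.y.a) = \varphi(byS^{-2}(a))\]
for $\varphi \in (A_{S^{-2}})^*$ and $a,b,y \in A$.

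Next I would check the bimodule homomorphism property, the source~$A$ carrying its regular bimodule structure $a.x.b = axb$. Compatibility with the right action is immediate: using $S^{-2}(1)=1$, one has $(\iota(x).b)(y) = \iota(x)(by) = \rho(xby) = \iota(xb)(y)$. Compatibility with the left action reads $\iota(ax) = a.\iota(x)$, that is,
\[\rho(axy) = (a.\iota(x))(y) = \iota(x)(yS^{-2}(a)) = \rho(xyS^{-2}(a))\]
for all $a,x,y \in A$. But this is exactly the defining relation of the generalized class functions evaluated on the pair $(a,\,xy)$, and it holds because $\rho = \iota(1_A) \in \bar{C}(A)$ has already been recorded above. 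Thus $\iota$ is a bimodule homomorphism; the point worth noting is that the twist by~$S^{-2}$ built into~$A_{S^{-2}}$ is precisely what is needed to absorb the noncocommutativity encoded in~$\bar{C}(A)$.

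Finally I would establish bijectivity. Because~$A$ is factorizable it is finite-dimensional, so $\dim A = \dim A^*$ and it suffices to prove that $\iota$ is injective. If $\iota(a) = 0$, then $\rho(ay) = 0$ for all $y \in A$; since a nonzero right integral is a Frobenius homomorphism for the finite-dimensional Hopf algebra~$A$---so that the pairing $(a,y) \mapsto \rho(ay)$ is nondegenerate (see, e.g., \cite{R})---this forces $a = 0$. Hence $\iota$ is a bijective bimodule homomorphism, i.e.\ a bimodule isomorphism.

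I expect the only genuinely non-formal step to be this last one. The homomorphism property is a one-line consequence of $\rho \in \bar{C}(A)$ together with the bookkeeping of the two bimodule structures, whereas bijectivity rests on the Frobenius nondegeneracy of the integral---an input from the general theory of finite-dimensional Hopf algebras rather than a computation specific to the situation at hand.
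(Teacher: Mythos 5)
Your proof is correct and follows essentially the same route as the paper: the homomorphism property is verified by the same computation resting on $\rho = \iota(1_A) \in \bar{C}(A)$ (the paper does the left and right compatibilities in a single line, you separate them), and bijectivity is obtained from the same input, namely that the nonzero right integral~$\rho$ is a Frobenius homomorphism for the finite-dimensional algebra~$A$, which the paper cites directly from Montgomery while you unpack it as nondegeneracy of the pairing plus a dimension count.
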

\begin{proof}
By \cite[Thm.~2.1.3, p.~18]{M}, $A$~is a Frobenius algebra with
Frobenius homomorphism~$\rho$, so that $\iota$ is bijective. It is a
bimodule homomorphism because
\begin{align*}
\iota(a_1 a a_2)(b) = \rho(a_1 a a_2 b) = \rho(a a_2 b S^{-2}(a_1))
= \iota(a)(a_2 b S^{-2}(a_1)) = (a_1.\iota(a).a_2)(b)
\end{align*}
for all $a, a_1, a_2, b\in A$, where the second equality holds because
$\rho \in \bar{C}(A)$, a fact already pointed out in Section~\ref{Sec:ActCent}.
\end{proof}

Because bimodule isomorphisms induce isomorphisms between the
corresponding Hoch\-schild cochain complexes, this proposition enables
us to generalize the Radford map to a cochain map as follows:
\begin{Definition} \label{iota1}
We define the Radford map for Hochschild cochain complexes as the
cochain map from~$C(A,A)$
to~$C(A,(A_{S^{-2}})^*)$ with components
\[\iota^n \colon C^n(A,A) \to C^n(A,(A_{S^{-2}})^*),~f \mapsto \iota \circ f.\]
In other words, we set $\iota^n := \iota_*$, the composition
with~$\iota$ on the left.
\end{Definition}

In order to compare this definition with the treatment of the Drinfel'd
map and the antipode below, it will be important to relate this cochain
map to another one defined between different cochain complexes. From
Proposition~\ref{IsoCompl}, we get a cochain map $\Omega = (\Omega^n)$
from the cochain complex $C^n(A, \prescript{}{\varepsilon}A_{\ad})$ to
the cochain complex~$C^n(A,A)$, but also a cochain map from the cochain
complex~$C^n(A, \prescript{}{\varepsilon}((A_{S^{-2}})^*)_{\ad})$ to the
cochain complex~$ C^n(A,(A_{S^{-2}})^*)$, which we denote by $\Omega'' =
(\Omega''^n)$. The bimodule
$\prescript{}{\varepsilon}((A_{S^{-2}})^*)_{\ad}$ admits a slightly
simpler description: For
$\varphi \in (A_{S^{-2}})^*$, $a\in A$ and $b \in A_{S^{-2}}$, we have
\begin{align*}
\ad(\varphi \ot a)(b) &= (S(a_{(1)}).\varphi.a_{(2)})(b) =
\varphi(a_{(2)}.b.S(a_{(1)})) \\
&= \varphi(a_{(2)} b S^{-1}(a_{(1)})) 
= \varphi_{(1)}(a_{(2)}) \varphi_{(2)}(b) \varphi_{(3)}(S^{-1}(a_{(1)})),
\end{align*}
which shows that the right adjoint action
in~$\prescript{}{\varepsilon}((A_{S^{-2}})^*)_{\ad}$ coincides with the
right coadjoint action of the coopposite Hopf algebra~$A^{\cop}$, which
we denote by
\[\coad \colon A^* \ot A \to A^*,~\varphi \ot a \mapsto
\varphi_{(1)}(a_{(2)}) \varphi_{(3)}(S^{-1}(a_{(1)})) \, \varphi_{(2)}.\]
In other words, we have
$\prescript{}{\varepsilon}((A_{S^{-2}})^*)_{\ad} =
\prescript{}{\varepsilon}(A^*)_{\coad}$. The Radford map now relates the
two isomorphisms~$\Omega$ and~$\Omega''$ as follows:

\begin{Lemma} \label{iota2} The diagram
\begin{center}
\begin{tikzcd} {}
C^n(A, \prescript{}{\varepsilon}(A^*)_{\coad}) \arrow{r}{\Omega''^n} &
C^n(A,(A_{S^{-2}})^*)\\
C^n(A,\prescript{}{\varepsilon}{A}_{\ad}) \arrow{u}{\iota_*}
\arrow{r}{\Omega^n} &
C^n(A,A) \arrow{u}{\iota_*}
\end{tikzcd}
\end{center}
commutes.
\end{Lemma}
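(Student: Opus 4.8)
The plan is to verify directly that the two composites $\iota_* \circ \Omega^n$ and $\Omega''^n \circ \iota_*$ coincide as maps $C^n(A, \prescript{}{\varepsilon}A_{\ad}) \to C^n(A,(A_{S^{-2}})^*)$. Both vertical arrows are instances of composition on the left with the Radford map~$\iota$. The right-hand one is a cochain map by Proposition~\ref{RadfBimod}, and the left-hand one is a cochain map because the passage $M \mapsto \prescript{}{\varepsilon}M_{\ad}$ carries the bimodule isomorphism $\iota \colon A \to (A_{S^{-2}})^*$ to a bimodule isomorphism $\prescript{}{\varepsilon}A_{\ad} \to \prescript{}{\varepsilon}((A_{S^{-2}})^*)_{\ad} = \prescript{}{\varepsilon}(A^*)_{\coad}$, using the identification established just before the lemma. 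With this in hand, only the equality of the two composites remains to be checked.

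I would evaluate both routes on a cochain~$f$, first at an element $a_1 \ot \dots \ot a_n$ and then at a further element $m \in A$, since the values are functionals on~$A$, and throughout use the elementary identity $\iota(x)(y) = \rho(xy)$. Inserting the formula for~$\Omega^n$ and then applying~$\iota$, the lower-then-right route yields
\[
(\iota_* (\Omega^n(f)))(a_1 \ot \dots \ot a_n)(m)
= \rho\bigl(a_{1(1)} \dots a_{n(1)} \, f(a_{1(2)} \ot \dots \ot a_{n(2)}) \, m\bigr).
\]
For the left-then-top route I apply~$\iota$ first and then~$\Omega''^n$. The one delicate point is the left action of~$A$ on the dual bimodule $(A_{S^{-2}})^*$: from the description $(a.\varphi)(m) = \varphi(m.a)$ in Example~\ref{Zero} together with the modified right action $m.a = m S^{-2}(a)$ of $A_{S^{-2}}$, one computes $(c.\psi)(m) = \psi(m\,S^{-2}(c))$. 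Taking $c = a_{1(1)} \dots a_{n(1)}$, this gives
\[
(\Omega''^n(\iota_*(f)))(a_1 \ot \dots \ot a_n)(m)
= \rho\bigl(f(a_{1(2)} \ot \dots \ot a_{n(2)}) \, m \, S^{-2}(a_{1(1)} \dots a_{n(1)})\bigr).
\]

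Comparing the two expressions, commutativity reduces to the identity $\rho(c\,y) = \rho(y\,S^{-2}(c))$, which I would apply with $c = a_{1(1)} \dots a_{n(1)}$ and $y = f(a_{1(2)} \ot \dots \ot a_{n(2)})\,m$, termwise in the sum over the components of the coproducts. But this is exactly the defining property of the generalized class functions from Definition~\ref{DrinfRadfMap}, applied to the fact that $\rho \in \bar{C}(A)$ recorded in Section~\ref{Sec:ActCent}. I expect the main (and essentially only) obstacle to be the correct bookkeeping of the modified right action in $A_{S^{-2}}$ and the resulting placement of $S^{-2}$ in the left action on its dual; once this is handled, the entire statement collapses to the class-function property of the right integral~$\rho$, which is the conceptual reason the diagram commutes.
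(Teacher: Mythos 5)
Your proposal is correct and follows essentially the same route as the paper's own proof: both verify the well-definedness of the left vertical map via the bimodule identification $\prescript{}{\varepsilon}((A_{S^{-2}})^*)_{\ad} = \prescript{}{\varepsilon}(A^*)_{\coad}$, compute both composites explicitly on an element $a_1 \ot \dots \ot a_n$ evaluated against a further element of~$A$, track the $S^{-2}$ arising from the left action on the dual of~$A_{S^{-2}}$, and conclude from the class-function property $\rho \in \bar{C}(A)$. The two computations match the paper's line by line, including the identification of that property as the conceptual reason for commutativity.
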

\begin{proof}
We first note that it follows from Proposition~\ref{RadfBimod} above
that the Radford map~$\iota$ is also a bimodule isomorphism
from~$\prescript{}{\varepsilon}{A}_{\ad}$
to~$\prescript{}{\varepsilon}((A_{S^{-2}})^*)_{\ad} =
\prescript{}{\varepsilon}(A^*)_{\coad}$, so that the map on the left is
well-defined. For $f\in C^n(A,\prescript{}{\varepsilon}{A}_{\ad}) =
\Hom_K(A^{\ot n},\prescript{}{\varepsilon}{A}_{\ad})$ and
$a_1,\dots,a_n,b\in A$, we now have on the one hand
\begin{align*}
\iota(\Omega^n(f)(a_1 \ot \dots \ot a_n))(b)
&= \iota(a_{1(1)} \dots a_{n(1)} f(a_{1(2)} \ot \dots \ot a_{n(2)}))(b)\\
&= \rho(a_{1(1)} \dots a_{n(1)} f(a_{1(2)} \ot \dots \ot a_{n(2)}) b).
\end{align*}

On the other hand, we have
\begin{align*}
((\Omega''^n\circ\iota_*)(f)(a_1\ot\dots\ot a_n))(b)
&= (a_{1(1)} \dots a_{n(1)}.\iota_*(f)(a_{1(2)} \ot \dots \ot
a_{n(2)}))(b) \\
&= \iota_*(f)(a_{1(2)} \ot \dots \ot a_{n(2)})(b.a_{1(1)} \dots a_{n(1)}) \\
&= \iota(f(a_{1(2)} \ot \dots \ot a_{n(2)}))(bS^{-2}(a_{1(1)} \dots
a_{n(1)})) \\
&= \rho(f(a_{1(2)} \ot \dots \ot a_{n(2)}) b S^{-2}(a_{1(1)} \dots
a_{n(1)})).
\end{align*}
Since $\rho \in \bar{C}(A)$, these expressions are equal.
\end{proof}

To generalize the Drinfel'd map to a cochain map between Hochschild
cochain complexes, we first recall from \cite[Par.~3.2, p.~26]{SZ} that
the Drinfel'd map $\bar{\Phi}$ is a bimodule isomorphism between
$\prescript{}{\varepsilon}{(A^*)}_{\coad}$ and
$\prescript{}{\varepsilon}A_{\cad}$, so that we obtain an isomorphism of
cochain complexes
\[\bar{\Phi}_*\colon C^n(A,\prescript{}{\varepsilon}{(A^*)}_{\coad})
\to C^n(A,\prescript{}{\varepsilon}{A}_{\cad}),~f \mapsto \bar{\Phi}
\circ f\]
by composing with~$\bar{\Phi}$ on the left. Now the isomorphism
$\Omega'$ from Corollary~\ref{IsoComplCor} enables us to obtain a
cochain map between the original cochain complexes:
\begin{Definition}
We define the Drinfel'd map for Hochschild cochain complexes as the cochain map
from~$C(A,(A_{S^{-2}})^*)$
to~$C(A,A)$ with components
$\bar{\Phi}^n := \Omega'^n \circ \bar{\Phi}_* \circ (\Omega''^n)^{-1}$. In
other words, it is the unique cochain map whose components make the diagram
\begin{center}
\begin{tikzcd}\label{GenDrinf} {}
C^n(A,\prescript{}{\varepsilon}{(A^*)}_{\coad}) \arrow{d}{\bar{\Phi}_*}
\arrow{r}{\Omega''^n} &
C^n(A,(A_{S^{-2}})^*) \arrow{d}{\bar{\Phi}^n} \\
C^n(A,\prescript{}{\varepsilon}{A}_{\cad}) \arrow{r}{\Omega'^n} &
C^n(A,A)
\end{tikzcd}
\end{center}
commutative.
\end{Definition}

With the help of the monodromy matrix~$Q$, the map $\bar{\Phi}^n$ can be
calculated explicitly:
For $f \in C^n(A,(A_{S^{-2}})^*) = \Hom_K(A^{\ot n},(A_{S^{-2}})^*)$ and
$a_1, \dots, a_n \in A$, we have
\begin{align*}
\bar{\Phi}^n(f)(a_1\ot\dots\ot a_n)
&= (\Omega'^n \circ \bar{\Phi}_* \circ (\Omega''^n)^{-1})(f)(a_1 \ot \dots
\ot a_n) \\
&= a_{1(2)} \dots a_{n(2)} \bar{\Phi}((\Omega''^n)^{-1}(f)(a_{1(1)} \ot
\dots \ot a_{n(1)}))\\
&= a_{1(3)} \dots a_{n(3)} \bar{\Phi}(S(a_{n(1)}) \dots
S(a_{1(1)}).f(a_{1(2)} \ot \dots \ot a_{n(2)})) \\
&= a_{1(3)} \dots a_{n(3)}
(S(a_{n(1)}) \dots S(a_{1(1)}).f(a_{1(2)} \ot \dots \ot
a_{n(2)}))(Q_1)Q_2 \\
&= a_{1(3)} \dots a_{n(3)}
f(a_{1(2)} \ot \dots \ot a_{n(2)}) (Q_1.(S(a_{n(1)}) \dots
S(a_{1(1)}))) Q_2 \\
&= f(a_{1(2)}\ot\dots\ot a_{n(2)})(Q_1S^{-1}(a_{n(1)})\dots
S^{-1}(a_{1(1)})) \,
a_{1(3)} \dots a_{n(3)} Q_2.
\end{align*}

In a similar way, we can generalize the antipode~$S$ to a cochain map
between Hochschild cochain complexes: Since we have
\[S(\cad(b \ot a)) = S(S^{-1}(a_{(2)})ba_{(1)}) = S(a_{(1)})S(b)a_{(2)}
= \ad(S(b) \ot a),\]
the antipode is a bimodule isomorphism from
$\prescript{}{\varepsilon}{A}_{\cad}$ to
$\prescript{}{\varepsilon}{A}_{\ad}$. Composition with~$S$ therefore
yields an isomorphism
\[S_*\colon C^n(A,\prescript{}{\varepsilon}{A}_{\cad})
\to C^n(A,\prescript{}{\varepsilon}{A}_{\ad}),~f \mapsto S \circ f\]
of cochain complexes. Now the isomorphisms~$\Omega$ and~$\Omega'$ from
Section~\ref{Sec:HochschHopf} enable us to obtain a cochain map between
the original cochain complexes:
\begin{Definition} \label{GenAntip}
We define the antipode map for Hochschild cochain complexes as the cochain map from
$C(A, A)$ to itself with components $S^n := \Omega^n \circ S_* \circ
(\Omega'^n)^{-1}$. In other words, it is the unique cochain map whose
components make the diagram
\begin{center}
\begin{tikzcd} {}
C^n(A, \prescript{}{\varepsilon}{A}_{\cad}) \arrow{d}{S_*}
\arrow{r}{\Omega'^n} &
C^n(A, A) \arrow{d}{S^n} \\
C^n(A, \prescript{}{\varepsilon}{A}_{\ad}) \arrow{r}{\Omega^n} & C^n(A,
A)
\end{tikzcd}
\end{center}
commutative.
\end{Definition}

As in the case of the Drinfel'd map, there is an explicit expression for
the antipode map for cochain complexes: For $f\in C^n(A, A) =
\Hom_K(A^{\ot n}, A)$ and $a_1, \dots, a_n \in A$, we have
\begin{align*}
S^n(f)(a_1 \ot \dots \ot a_n)
&= (\Omega^n \circ S_* \circ (\Omega'^n)^{-1})(f)(a_1 \ot \dots \ot a_n) \\
&= a_{1(1)} \dots a_{n(1)}S((\Omega'^n)^{-1}(f)(a_{1(2)}\ot\dots\ot
a_{n(2)}))\\
&= a_{1(1)}\dots a_{n(1)}S(S^{-1}(a_{n(3)})\dots
S^{-1}(a_{1(3)})f(a_{1(2)}\ot\dots\ot a_{n(2)}))\\
&= a_{1(1)}\dots a_{n(1)}S(f(a_{1(2)}\ot\dots\ot
a_{n(2)}))a_{1(3)}\dots a_{n(3)}.
\end{align*}

\section{The Action on the Hochschild Cochain Complex} \label{Sec:ActHoch}
We still remain in the situation described in Section~\ref{Sec:ActCent}
and Section~\ref{Sec:RadfDrinf}. Our goal is to use the Radford map, the
Drinfel'd map and the antipode map for Hochschild cochain complexes
introduced in Section~\ref{Sec:RadfDrinf} in order to construct a
projective action of the modular group~$\SL(2,\Z)$ on each Hochschild
cohomology group~$HH^n(A,A)$ in such a way that the action on the zeroth
Hochschild cohomology group, which is, as we saw in Example~\ref{Zero},
equal to the center~$Z(A)$, coincides with the action described in
Section~\ref{Sec:ActCent}. Up to homotopy, we will in fact construct a
projective action of the modular group on the entire Hochschild cochain
complex.

To define a projective representation of the modular group, we have to
specify the images of the generators~$\s$ and~$\t$ introduced in
Section~\ref{Sec:ActCent} and prove that they satisfy the defining
relations stated there. For the first generator~$\s$, we use the same
approach as in Section~\ref{Sec:ActCent} and map it to the composition
of the Radford map, the Drinfel'd map and the antipode:
\begin{Definition}
We define $\gS^n \colon C^n(A, A) \to C^n(A, A)$ as
$\gS^n := S^n \circ \bar{\Phi}^n \circ \iota^n$.
\end{Definition}
Because the cochain complex versions of the Radford map, the Drinfel'd
map and the antipode are cochain isomorphisms by construction, the maps $\gS^n$
are the components of a cochain automorphism of the Hochschild cochain
complex. Its basic property is the following:
\begin{Lemma} \label{LgS} The diagram
\begin{center}
\begin{tikzcd} {}
C^n(A,\prescript{}{\varepsilon}{A}_{\ad}) \arrow{d}{\gS_*}
\arrow{r}{\Omega^n} &
C^n(A, A) \arrow{d}{\gS^n} \\
C^n(A,\prescript{}{\varepsilon}{A}_{\ad}) \arrow{r}{\Omega^n} &
C^n(A, A)
\end{tikzcd}
\end{center}
commutes.
\end{Lemma}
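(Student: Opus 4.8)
The plan is to obtain the square as a composite of the three commutative squares already built into the definitions of the constituent maps of $\gS^n = S^n \circ \bar{\Phi}^n \circ \iota^n$. The point is that $\iota^n$, $\bar{\Phi}^n$ and $S^n$ were each defined precisely so as to intertwine the relevant $\Omega$-isomorphisms with the corresponding lower-star maps at the level of the modified bimodules, and stacking these squares causes the intermediate $\Omega$-maps to cancel, leaving the outer square that is the assertion.

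Concretely, I would chain the identities in order. First, Lemma~\ref{iota2} gives $\iota^n \circ \Omega^n = \Omega''^n \circ \iota_*$, where on the right $\iota_*$ is the map induced by the bimodule isomorphism $\iota \colon \prescript{}{\varepsilon}A_{\ad} \to \prescript{}{\varepsilon}(A^*)_{\coad}$. Next, the defining square for the Drinfel'd map gives $\bar{\Phi}^n \circ \Omega''^n = \Omega'^n \circ \bar{\Phi}_*$, and the defining square for the antipode map gives $S^n \circ \Omega'^n = \Omega^n \circ S_*$. Substituting these successively, I arrive at
\[
\gS^n \circ \Omega^n = S^n \circ \bar{\Phi}^n \circ \iota^n \circ \Omega^n = \Omega^n \circ S_* \circ \bar{\Phi}_* \circ \iota_*.
\]
To conclude, I would use the functoriality of the lower-star construction, $(g \circ h)_* = g_* \circ h_*$, to rewrite $S_* \circ \bar{\Phi}_* \circ \iota_* = (S \circ \bar{\Phi} \circ \iota)_* = \gS_*$. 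This makes sense on $C^n(A,\prescript{}{\varepsilon}A_{\ad})$ because $\gS = S \circ \bar{\Phi} \circ \iota$ is a bimodule endomorphism of $\prescript{}{\varepsilon}A_{\ad}$, being the composite of the three bimodule isomorphisms $\iota$, $\bar{\Phi} \colon \prescript{}{\varepsilon}(A^*)_{\coad} \to \prescript{}{\varepsilon}A_{\cad}$ and $S \colon \prescript{}{\varepsilon}A_{\cad} \to \prescript{}{\varepsilon}A_{\ad}$ recorded in Section~\ref{Sec:RadfDrinf}. The displayed equation then reads $\gS^n \circ \Omega^n = \Omega^n \circ \gS_*$, which is the asserted commutativity.

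I do not expect any genuine obstacle here, since the whole argument is a diagram chase that reduces to squares we already possess; the only care needed is bookkeeping, namely tracking which of $\Omega$, $\Omega'$, $\Omega''$ occurs at each stage and checking that the domains and codomains of the intermediate maps match. Here the identification $\prescript{}{\varepsilon}((A_{S^{-2}})^*)_{\ad} = \prescript{}{\varepsilon}(A^*)_{\coad}$ together with the three bimodule-isomorphism statements from Section~\ref{Sec:RadfDrinf} are exactly what guarantee that the squares fit together. A direct verification via the explicit formulas for $\bar{\Phi}^n$, $S^n$ and $\Omega^n$ is also possible, but it would require lengthy Heyneman-Sweedler manipulations that the diagram chase avoids.
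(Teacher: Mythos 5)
Your proposal is correct and coincides with the paper's own proof: both chain Lemma~\ref{iota2}, the defining square of $\bar{\Phi}^n$, and the defining square of $S^n$, then use $(S \circ \bar{\Phi} \circ \iota)_* = \gS_*$ together with the observation that $\gS$ is a bimodule automorphism of $\prescript{}{\varepsilon}A_{\ad}$. No gaps; the bookkeeping of $\Omega$, $\Omega'$, $\Omega''$ is exactly as in the paper.
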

\begin{proof}
This is immediate from Lemma~\ref{iota2}, Definition~\ref{GenDrinf} and
Definition~\ref{GenAntip}: We have
\begin{align*}
\gS^n \circ \Omega^n &= S^n \circ \bar{\Phi}^n \circ \iota^n \circ
\Omega^n =
S^n \circ \bar{\Phi}^n \circ \Omega''^n \circ \iota_* \\
&= S^n \circ \Omega'^n \circ \bar{\Phi}_* \circ \iota_* =
\Omega^n \circ S_* \circ \bar{\Phi}_* \circ \iota_* =
\Omega^n \circ (S \circ \bar{\Phi} \circ \iota)_* =
\Omega^n \circ \gS_*
\end{align*}
since successive composition with~$\iota$, $\bar{\Phi}$ and~$S$ is the
same as composition with~$\gS$. It may be noted that~$\gS$, as the
composition of the bimodule isomorphisms~$\iota$, $\bar{\Phi}$ and~$S$,
is a bimodule automorphism of~$\prescript{}{\varepsilon}{A}_{\ad}$, so
that the map~$\gS_*$ on the left is indeed the component of a cochain map.
\end{proof}

It is not difficult to compute~$\gS^n$ explicitly in terms of the
monodromy matrix~$Q$:
\begin{Cor} \label{CgS}
For $f \in C^n(A, A) = \Hom_K(A^{\ot n}, A)$ and $a_1, \dots, a_n \in
A$, we have
\begin{align*}
&\gS^n(f)(a_1 \ot \dots \ot a_n) =
\rho \left(S(a_{n(2)}) \dots S(a_{1(2)}) f(a_{1(3)} \ot \dots \ot a_{n(3)})
Q_1 \right) \mspace{1mu}
a_{1(1)} \dots a_{n(1)} S(Q_2).
\end{align*}
\end{Cor}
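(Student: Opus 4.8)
The plan is to sidestep the direct composition of the three explicit formulas for $\iota^n$, $\bar{\Phi}^n$ and $S^n$, and instead read the result off Lemma~\ref{LgS}. That lemma asserts that $\gS^n \circ \Omega^n = \Omega^n \circ \gS_*$, where $\gS_*$ is composition on the left with the bimodule automorphism $\gS = S \circ \bar{\Phi} \circ \iota$ of $\prescript{}{\varepsilon}{A}_{\ad}$. Since $\Omega^n$ is an isomorphism of cochain groups by Proposition~\ref{IsoCompl}, I can rewrite this as $\gS^n = \Omega^n \circ \gS_* \circ (\Omega^n)^{-1}$, and the entire proof reduces to substituting the three known explicit formulas for $\Omega^n$, $(\Omega^n)^{-1}$ and $\gS$.

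First I would apply the formula for $(\Omega^n)^{-1}$ recorded after Proposition~\ref{IsoCompl}, obtaining
\[(\Omega^n)^{-1}(f)(a_1 \ot \dots \ot a_n) = S(a_{n(1)}) \dots S(a_{1(1)}) \, f(a_{1(2)} \ot \dots \ot a_{n(2)}).\]
Next I would compose on the left with $\gS$ in its explicit form $\gS(a) = \rho(a Q_1) \, S(Q_2)$ from Section~\ref{Sec:ActCent}; since this sends an element $x$ to the scalar $\rho(x Q_1)$ times $S(Q_2)$, the composite $\gS_* \circ (\Omega^n)^{-1}$ applied to $f$ and evaluated at $a_1 \ot \dots \ot a_n$ equals $\rho(S(a_{n(1)}) \dots S(a_{1(1)}) f(a_{1(2)} \ot \dots \ot a_{n(2)}) Q_1) \, S(Q_2)$.

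Finally I would apply $\Omega^n$, which prepends the factor $a_{1(1)} \dots a_{n(1)}$ and feeds the second Sweedler components into the argument above. Merging the outer coproduct coming from $\Omega^n$ with the inner coproducts by coassociativity, so that each $a_i$ carries exactly three Sweedler factors $a_{i(1)}, a_{i(2)}, a_{i(3)}$, and pulling the scalar $\rho(\dots)$ to the front, gives precisely the asserted formula. The only delicate point in the whole argument is this last relabeling of Sweedler indices, where the splitting introduced by $\Omega^n$ must be consolidated with those already present; this is pure bookkeeping and is where an index slip would most easily creep in.

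I could instead compose the explicit formulas for $\iota^n$, $\bar{\Phi}^n$ and $S^n$ head-on, but that route is noticeably longer: it produces an extra pair of Sweedler factors for each $a_i$ that must be cancelled using the counit axiom for the antipode, and it leaves the factors $S^{-1}(a_{i(2)})$ on the wrong side of $f(\dots) Q_1$ inside $\rho$, which then have to be moved across using the fact that $\rho \in \bar{C}(A)$ is a generalized class function. Routing through Lemma~\ref{LgS} avoids both of these complications.
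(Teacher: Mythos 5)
Your proposal is correct and is essentially the paper's own proof: the paper likewise reads $\gS^n = \Omega^n \circ \gS_* \circ (\Omega^n)^{-1}$ off Lemma~\ref{LgS} and then substitutes the explicit formulas for $\Omega^n$, $(\Omega^n)^{-1}$ and $\gS(a) = \rho(aQ_1)\,S(Q_2)$, consolidating the Sweedler indices by coassociativity exactly as you describe. The only difference is cosmetic — the paper unwinds the composition from the outside in, while you work from the inside out.
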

\begin{proof}
We have seen in Section~\ref{Sec:ActCent} that $\gS(a) = \rho(aQ_1)
S(Q_2)$, so that we get
\begin{align*}
\gS^n(f)(a_1 \ot \dots \ot a_n)
&= (\Omega^n \circ \gS_* \circ (\Omega^n)^{-1})(f)(a_1 \ot \dots \ot a_n) \\
&= a_{1(1)} \dots a_{n(1)} \gS((\Omega^n)^{-1}(f)(a_{1(2)} \ot \dots \ot
a_{n(2)})) \\
&= a_{1(1)} \dots a_{n(1)} \gS(S(a_{n(2)}) \dots S(a_{1(2)})
f(a_{1(3)}\ot\dots\ot a_{n(3)})) \\
&= \rho \left(S(a_{n(2)}) \dots S(a_{1(2)}) f(a_{1(3)} \ot \dots \ot
a_{n(3)}) Q_1\right) \mspace{1mu}
a_{1(1)} \dots a_{n(1)} S(Q_2)
\end{align*}
from the preceding lemma.
\end{proof}

For the second generator~$\t$ of the modular group, we also proceed as
in Section~\ref{Sec:ActCent} and let it act on the cochain groups by
multiplication with the ribbon element~$v$:
\begin{Definition}
We define $\T^n \colon C^n(A, A) \to C^n(A, A)$ as
\[\T^n(f)(a_1 \ot \dots \ot a_n) := v f(a_1 \ot \dots \ot a_n)\]
for $f \in C^n(A, A) = \Hom_K(A^{\ot n}, A)$. In other words, using the
map~$\T$ introduced in Section~\ref{Sec:ActCent}, we set $\T^n:=\T_*$,
the composition with~$\T$ on the left.
\end{Definition}

Because~$v$ is central, the maps~$\T^n$ commute with the
differentials~$d^n$ and therefore constitute the components of a cochain
map. The centrality of~$v$ also implies that the diagram
\begin{center}
\begin{tikzcd} {}
C^n(A,\prescript{}{\varepsilon}{A}_{\ad}) \arrow{d}{\T_*}
\arrow{r}{\Omega^n} &
C^n(A, A) \arrow{d}{\T^n} \\
C^n(A,\prescript{}{\varepsilon}{A}_{\ad}) \arrow{r}{\Omega^n} &
C^n(A, A)
\end{tikzcd}
\end{center}
is commutative. Here, it is understood that~$\T_*$ is also given
on~$C^n(A,\prescript{}{\varepsilon}A_{\ad})$ by multiplication with~$v$,
and not via the left or right action of~$v$
on~$\prescript{}{\varepsilon}A_{\ad}$.

The key result that relates these maps to the modular group is the
following theorem:
\begin{Theorem} \label{sts}~
\begin{thmlist}
\item
We have $\gS^n \circ \T^n \circ \gS^n = \rho(v) \; (\T^n)^{-1} \circ \gS^n
\circ (\T^n)^{-1}$.

\item
The cochain maps with components $(\gS^n)^4$ and $((\rho \ot \rho)(Q))^2 \, \id_{C^n(A,A)}$ are homotopic.
\end{thmlist}
\end{Theorem}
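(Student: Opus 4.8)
The plan is to reduce both assertions, via the isomorphism $\Omega$ of Hochschild cochain complexes from Proposition~\ref{IsoCompl}, to statements about the induced maps $\gS_*$ and $\T_*$ on the complex $C(A,\prescript{}{\varepsilon}A_{\ad})$, where they can be read off directly from Proposition~\ref{ModRel}. Indeed, Lemma~\ref{LgS} and the corresponding diagram for~$\T$ give $\gS^n = \Omega^n \circ \gS_* \circ (\Omega^n)^{-1}$ and $\T^n = \Omega^n \circ \T_* \circ (\Omega^n)^{-1}$, where $\gS_*$ and $\T_*$ are induced by composition on the left with the bimodule automorphisms $\gS$ and $\T$ of $\prescript{}{\varepsilon}A_{\ad}$, the latter being multiplication by the ribbon element~$v$ in the algebra~$A$. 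Since $\Omega=(\Omega^n)$ is an isomorphism of cochain complexes, conjugation by it preserves both exact operator identities and homotopy classes of cochain maps (transporting a homotopy $h$ to $\Omega h \Omega^{-1}$, using that $\Omega$ commutes with the differential). It therefore suffices to prove both relations for $\gS_*$ and $\T_*$.

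For the first relation, I would use that composition on the left is a $K$-linear functor, so that $(g\circ h)_* = g_*\circ h_*$ and scalars pass through. Reading the first relation of Proposition~\ref{ModRel}, namely $\gS\circ\T\circ\gS = \rho(v)\,\T^{-1}\circ\gS\circ\T^{-1}$, as an identity of bimodule endomorphisms of $\prescript{}{\varepsilon}A_{\ad}$ and applying the lower-star operation yields
\[
\gS_*\circ\T_*\circ\gS_* = \rho(v)\,(\T_*)^{-1}\circ\gS_*\circ(\T_*)^{-1}.
\]
Conjugating by $\Omega^n$ gives the first assertion, as an exact equality and not merely up to homotopy.

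For the second relation, I would first square the second relation of Proposition~\ref{ModRel}, $\gS^2 = (\rho\ot\rho)(Q)\,\uS^{-1}$, to obtain $\gS^4 = ((\rho\ot\rho)(Q))^2\,\uS^{-2}$, and hence $(\gS_*)^4 = ((\rho\ot\rho)(Q))^2\,((\uS^2)_*)^{-1}$. It thus remains to show that $(\uS^2)_*$ is homotopic to the identity on $C(A,\prescript{}{\varepsilon}A_{\ad})$; inverting this homotopy (compose it with the cochain automorphism $(\uS^{-2})_*$, which commutes with the differential) and multiplying by the scalar $((\rho\ot\rho)(Q))^2$ then gives $(\gS_*)^4 \simeq ((\rho\ot\rho)(Q))^2\,\id$, and conjugating back by $\Omega$ yields the claim. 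This is the point where Proposition~\ref{LeftRight} and Lemma~\ref{SqRib} enter. The key observation is that on the bimodule $M=\prescript{}{\varepsilon}A_{\ad}$, whose left action is trivial and whose right action is the adjoint action, the central element $c=v$ acts on the left as $l_v^M = \varepsilon(v)\,\id = \id$ and, by Lemma~\ref{SqRib}, on the right as $r_v^M(m)=\ad(m\ot v)=\uS^2(m)$. Proposition~\ref{LeftRight}, applied to this bimodule and this central element, then states precisely that $(\uS^2)_* = (r_v^M)_*$ is homotopic to $(l_v^M)_* = \id$.

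The main obstacle is the second relation, and its crux is the identification in the previous paragraph: recognizing that $\uS^2$ is exactly right multiplication by the central ribbon element~$v$ in the bimodule $\prescript{}{\varepsilon}A_{\ad}$, so that the abstract homotopy furnished by Proposition~\ref{LeftRight} applies, with the trivial left action contributing the identity. The remaining steps are routine and formal: the functoriality of the lower-star operation, the invariance of homotopy classes under conjugation by the cochain isomorphism $\Omega$, and the two elementary manipulations of homotopies (inversion through an automorphism and scaling) used above.
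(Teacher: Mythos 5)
Your proposal is correct and follows essentially the same route as the paper's own proof: both parts are transported through the isomorphism $\Omega$ to the complex $C(A,\prescript{}{\varepsilon}A_{\ad})$ via Lemma~\ref{LgS} and the analogous diagram for $\T$, part~1 then follows from the functoriality of the lower-star operation applied to Proposition~\ref{ModRel}, and part~2 from squaring the relation $\gS^2 = (\rho\ot\rho)(Q)\,\uS^{-1}$ and recognizing, via Lemma~\ref{SqRib}, that $(\uS^2)_* = (r_v^M)_*$ is homotopic by Proposition~\ref{LeftRight} to $(l_v^M)_* = \id$ since the left action is trivial and $\varepsilon(v)=1$. Your explicit justification that conjugation by $\Omega$ and composition with the automorphism $(\uS^{-2})_*$ preserve homotopy classes only spells out what the paper leaves implicit in its phrase ``or equivalently.''
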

\smallskip
\begin{proof}
As recalled in Proposition~\ref{ModRel}, we have
$\gS \circ \T \circ \gS = \rho(v) \; \T^{-1} \circ \gS \circ \T^{-1}$.
Combining the commutativity of the preceding diagram with
Lemma~\ref{LgS}, we therefore get
\begin{align*}
\gS^n \circ \T^n \circ \gS^n \circ \Omega^n &=
\Omega^n \circ \gS_* \circ \T_* \circ \gS_* =
\Omega^n \circ (\gS \circ \T \circ \gS)_* \\
&= \rho(v) \, \Omega^n \circ (\T^{-1} \circ \gS \circ \T^{-1})_*
= \rho(v) \, \Omega^n \circ \T^{-1}_* \circ \gS_* \circ \T^{-1}_*\\
&=\rho(v) \, (\T^n)^{-1}\circ\gS^n\circ(\T^n)^{-1} \circ \Omega^n.
\end{align*}
Because~$\Omega^n$ is bijective, this proves our first assertion.

To prove the second assertion, it suffices to show that the cochain maps
with components $(\Omega^n)^{-1} \circ (\gS^n)^4 \circ \Omega^n$ and 
$((\rho \ot \rho)(Q))^2 \, \id_{C^n(A,\prescript{}{\varepsilon}A_{\ad})}$ are
homotopic, because~$(\Omega^n)$ is an isomorphism of cochain complexes.
By Lemma~\ref{LgS}, we have $(\Omega^n)^{-1} \circ (\gS^n)^4 \circ \Omega^n
= (\gS^4)_*$, and we also have $\gS^2 = (\rho \ot \rho)(Q) \, \uS^{-1}$ by
Proposition~\ref{ModRel}. Therefore our second assertion will hold if we
can show that the cochain map $(\uS^{-2})_*$ is homotopic to the
identity on the cochain complex~$C(A,\prescript{}{\varepsilon}A_{\ad})$,
or equivalently that the cochain map $(\uS^2)_*$ is homotopic to the identity.

We know from Lemma~\ref{SqRib} that~$\uS^2(a) = \ad(a \ot v)$ for all~$a
\in A$. Because by definition the right action on the bimodule $M :=
\prescript{}{\varepsilon}A_{\ad}$ is given by the right adjoint action,
this means in the notation of Proposition~\ref{LeftRight} that
$(\uS^2)_* = (r_v^M)_*$. Now Proposition~\ref{LeftRight} states
that~$(r_v^M)_*$ is homotopic to~$(l_v^M)_*$. But~$(l_v^M)_*$ is the
identity: For $f \in C^n(A,M) =
\Hom_K(A^{\ot n},\prescript{}{\varepsilon}A_{\ad})$ and $a_1, \dots, a_n
\in A$, we have
\begin{align*}
(l_v^M)_*(f)(a_1 \ot \dots \ot a_n) &= v.f(a_1\ot\dots\ot a_n)
= \varepsilon(v) f(a_1 \ot \dots \ot a_n) = f(a_1 \ot \dots \ot a_n)
\end{align*}
because $\varepsilon(v) = 1$.
\end{proof}

As a consequence, we can generalize the projective action of the modular
group on the center~$Z(A)$ obtained in Corollary~\ref{ProjRepCent},
which is by Example~\ref{Zero} equal
to~$HH^0(A, A)$, to an arbitrary Hochschild cohomology group~$HH^n(A,
A)$. For this, we denote the automorphisms of~$HH^n(A, A)$ induced by
the cochain maps~$(\gS^n)$ and~$(\T^n)$ by~$\overline{\gS^n}$
and~$\overline{\T^n}$, respectively, and by~$P(\overline{\gS^n})$
and~$P(\overline{\T^n})$ we denote the corresponding automorphisms of
the projective
space~$P(HH^n(A, A))$. We then have the following generalization of
Corollary~\ref{ProjRepCent}:

\begin{Cor} \label{ActHochCohom}
There is a unique homomorphism from $\SL(2,\Z)$ to $\PGL(HH^n(A, A))$
that maps~$\s$ to~$P(\overline{\gS^n})$ and~$\t$ to~$P(\overline{\T^n})$.
\end{Cor}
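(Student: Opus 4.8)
The plan is to transport the two cochain-level identities of Theorem~\ref{sts} down to the cohomology groups and then to projectivize them, so that they become exactly the defining relations of $\SL(2,\Z)$. First I would record that $\gS^n$ and $\T^n$ are cochain automorphisms: the map $\gS^n = S^n \circ \bar{\Phi}^n \circ \iota^n$ is a composition of cochain isomorphisms, and $\T^n$, being multiplication by the invertible ribbon element~$v$, has inverse given by multiplication by~$v^{-1}$. Hence both pass to automorphisms $\overline{\gS^n}$ and $\overline{\T^n}$ of $HH^n(A,A)$, which are the maps we wish to projectivize.

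Next I would push Theorem~\ref{sts} to cohomology. The first relation there is an \emph{exact} equality of cochain maps, so it descends verbatim to $HH^n(A,A)$, giving
\[
\overline{\gS^n} \circ \overline{\T^n} \circ \overline{\gS^n} = \rho(v)\,(\overline{\T^n})^{-1} \circ \overline{\gS^n} \circ (\overline{\T^n})^{-1}.
\]
For the second relation I would use that homotopic cochain maps induce the same map on cohomology: since $(\gS^n)^4$ is homotopic to $((\rho\ot\rho)(Q))^2\,\id_{C^n(A,A)}$, passing to cohomology yields $(\overline{\gS^n})^4 = ((\rho\ot\rho)(Q))^2\,\id$ on $HH^n(A,A)$.

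Now I would pass to $\PGL(HH^n(A,A))$. Here one checks that the scalars $\rho(v)$ and $(\rho\ot\rho)(Q)$ are both nonzero: $\rho(v) \neq 0$ was recorded after Corollary~\ref{ProjRepCent}, and $(\rho\ot\rho)(Q) \neq 0$ because $\gS$ is bijective while $\gS^2 = (\rho\ot\rho)(Q)\,\uS^{-1}$ with $\uS$ invertible by Proposition~\ref{ModRel}. Thus both scalar operators lie in $\GL(HH^n(A,A))$ and become trivial under the projection $P$, which is a group homomorphism. Consequently the images satisfy
\[
P(\overline{\gS^n})^4 = \id \quad\text{and}\quad P(\overline{\gS^n})\,P(\overline{\T^n})\,P(\overline{\gS^n}) = P(\overline{\T^n})^{-1}\,P(\overline{\gS^n})\,P(\overline{\T^n})^{-1},
\]
which are precisely the relations $\s^4 = 1$ and $\s\t\s = \t^{-1}\s\t^{-1}$ from Section~\ref{Sec:ActCent}.

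Finally, since these are the defining relations for the presentation of $\SL(2,\Z)$ on the generators $\s$ and~$\t$ recalled there, the universal property of that presentation yields a homomorphism $\SL(2,\Z) \to \PGL(HH^n(A,A))$ sending $\s \mapsto P(\overline{\gS^n})$ and $\t \mapsto P(\overline{\T^n})$; uniqueness is automatic because $\s$ and~$\t$ generate the group. I expect the only genuine subtlety to be the bookkeeping of scalars: one must be certain that the operator in the second relation is a nonzero multiple of the identity, so that it is invertible and dies in $\PGL$, and that the factor $\rho(v)$ in the first relation likewise disappears projectively. Everything else is a formal consequence of the homotopy invariance of cohomology together with the presentation of the modular group, so this corollary should follow quickly once Theorem~\ref{sts} is in hand.
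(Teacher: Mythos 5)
Your proposal is correct and follows essentially the same route as the paper, which states this corollary without a separate proof precisely because it is the formal consequence of Theorem~\ref{sts} that you spell out: the exact cochain relation descends to cohomology, the homotopy relation becomes an equality on cohomology, the nonzero scalars $\rho(v)$ and $((\rho\ot\rho)(Q))^2$ die in $\PGL(HH^n(A,A))$, and the presentation of $\SL(2,\Z)$ recalled in Section~\ref{Sec:ActCent} then gives existence and uniqueness. Your verification that $(\rho\ot\rho)(Q)\neq 0$ via the bijectivity of $\gS$ and $\uS$ is a reasonable way to fill in the one detail the paper leaves tacit.
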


Exactly as in the analogous discussion at the end of
Section~\ref{Sec:ActCent}, this representation of the modular group is
linear, and not only projective, if the unique ribbon-normalized right
integral~$\rho$ satisfies~$\rho(v^{-1}) = \pm 1$, or equivalently $(\rho
\ot \rho)(Q) = \pm 1$.

\end{document}